\documentclass{amsart}

\usepackage{amsmath, amssymb, amscd, amsthm}
\usepackage{geometry}
\usepackage{needspace}

\newtheorem{thm}{Theorem}[section]
\newtheorem{lem}[thm]{Lemma}
\newtheorem{prop}[thm]{Proposition}
\newtheorem{cor}[thm]{Corollary}
\theoremstyle{definition}
\newtheorem{counterexample}[thm]{Counterexample}
\numberwithin{equation}{section}  

\newcommand{\set}[1]{\left\{ #1  \right\}}
\newcommand{\MK}[1]{M_\K({#1})}
\newcommand{\MX}[1]{M_\X({#1})}
\newcommand{\MY}[1]{M_\Y({#1})}
\newcommand{\PX}[1]{P_\X({#1})}
\newcommand{\PY}[1]{P_\Y({#1})}
\newcommand{\PK}[1]{P_\K({#1})}
\newcommand{\Phii}{\Phi^{-1}}
\newcommand{\xo}{x_1}
\newcommand{\xz}{x_0}
\newcommand{\yz}{y_0}
\newcommand{\yo}{y_1}
\newcommand{\X}{X}
\newcommand{\Y}{Y}
\newcommand{\K}{K}
\newcommand{\SCX}{{S_{\X}^+}}
\newcommand{\SCY}{{S_{\Y}^+}}
\newcommand{\SCK}{{S_{\K}^+}}
\newcommand{\BK}{B_{\K}^{+}}
\newcommand{\BX}{B_{\X}^{+}}
\newcommand{\BY}{B_{\Y}^{+}}
\newcommand{\GK}[1]{{G_K}{(#1)}}
\newcommand{\GX}[1]{{G_X}{(#1)}}
\newcommand{\GY}[1]{{G_Y}{(#1)}}
\newcommand{\B}{B_{I}^{+}}
\newcommand{\Psir}{\Psi_r}
\newcommand{\G}{\mathcal{G}_{p}}
\newcommand{\Sp}{\mathcal{S}_{p}^+}
\newcommand{\Sinf}{{\mathcal{S}_{\infty}^+}}
\newcommand{\SCL}{S_{L}^+}
\newcommand{\V}[1]{\Vert{#1}\Vert}
\newcommand{\Vinf}[1]{\Vert{#1}\Vert_{(\infty)}} 
\newcommand{\Vp}[1]{\Vert{#1}\Vert_{(p)}}
\newcommand{\VI}[1]{\Vert{#1}\Vert_{I}}
\newcommand{\VK}[1]{\Vert{#1}\Vert_{K}}
\newcommand{\VL}[1]{\Vert{#1}\Vert_{L}}
\newcommand{\VX}[1]{\Vert{#1}\Vert_{X}}
\newcommand{\VY}[1]{\Vert{#1}\Vert_{Y}}
\newcommand{\Jp}{J_p}
\newcommand{\Jinf}{J_\infty}

\title[Norm-additive maps on $\mathcal{S}_{p}^+$]{Norm-Additive Maps on the Positive Unit Sphere of $C^1([0,1])$}

\author[K.~Ezumi]{Kazuki Ezumi}
\thanks{Kazuki Ezumi:
Graduate School of Integrated Arts and Sciences,
Niigata University,
Niigata 950-2181, Japan.
E-mail (corresponding author): y26e023k@mail.cc.niigata-u.ac.jp}

\author[T.~Miura]{Takeshi Miura}
\thanks{Takeshi Miura:
Department of Mathematics,
Faculty of Science,
Niigata University,
Niigata 950-2181, Japan.
E-mail: miura@math.sc.niigata-u.ac.jp}
\dedicatory{Dedicated to Professor Ngai-Ching Wong on the occasion of his 65th birthday}
\begin{document}

\keywords{Norm-additive maps, Banach--Stone theorem, positive unit spheres,
continuous function spaces, $C^1([0,1])$}
\subjclass[2020]{46B20, 46E15}

\begin{abstract}
Let $C^1(I)$ denote the space of all continuously differentiable
real-valued functions on the closed unit interval $I=[0,1]$.
For $1\le p\le\infty$, we equip $C^1(I)$ with the norm
\[
\Vp{f}
:=
\begin{cases}
\left(|f(0)|^p+\VI{f'}^p\right)^{1/p},
&
1\le p<\infty,
\\
\max\{|f(0)|,\VI{f'}\},
&
p=\infty,
\end{cases}
\qquad
(f\in C^1(I)),
\]
where $\VI{\cdot}$ denotes the supremum
norm on $I$.
Let 
$\Sp:=\set{f\in C^1(I):f(0)\ge0,\,f'\ge0,\,\Vp{f}=1}$
be the positive unit sphere of $C^1(I)$.
A map $T:\mathcal{S}_p^+\to \mathcal{S}_p^+$ is called \emph{norm-additive} if
\[
\|T(f)+T(g)\|_{(p)}=\|f+g\|_{(p)}
\qquad (f,g\in \mathcal{S}_p^+).
\]
For $1<p\le\infty$, we prove that every surjective norm-additive map
$T\colon\Sp\to\Sp$ is of the form
\[
T(f)(t)
=
f(0)+\int_0^t f'(\tau(s))\,ds
\]
for all $f\in\Sp$ and $t\in I$, where
$\tau\colon I\to I$ is a homeomorphism.
Conversely, every map of this form is a surjective norm-additive map.
Consequently, every such map extends to a surjective linear isometry
from $(C^1(I),\Vp{\cdot})$ onto itself.
For $p=1$, however, we construct a surjective norm-additive map on
$\mathcal{S}_1^+$ which
is not of the above form and admits no linear extension to
$C^1(I)$,
showing that the assumption $p>1$ is essential.
\end{abstract}

\maketitle

\section{Introduction}
Extension problems for mappings defined on unit spheres have been studied
extensively in Banach space theory. A classical problem in this direction is
Tingley's problem. In 1987, Tingley~\cite{ting} asked whether every
surjective isometry between the unit spheres of two Banach spaces extends to
a surjective linear isometry between the whole spaces. Since then, Tingley's
problem and its variants have been investigated for many classes of Banach
spaces; see, for example,
\cite{CHMP,CP,Hatori2022,Hatori2023,HOS,HM,peralta,Wang1994,Wang1996}
and the references therein. Variants in which the mapping is defined only on
the positive unit sphere have also attracted considerable attention.

Let $K$ be a compact Hausdorff space, and let $C(K)$ denote the Banach space
of all real-valued continuous functions on $K$, equipped with the supremum
norm
\[
\VK{f}:=\sup_{x\in K}|f(x)|
\qquad (f\in C(K)).
\]
We write
\[
\SCK:=\set{f\in C(K):f\geq0,\ \VK{f}=1}
\qquad\text{and}\qquad
\BK:=\set{f\in C(K):f\geq0,\ \VK{f}\leq1},
\]
and call them the positive unit sphere and the positive unit ball of $C(K)$, respectively.
Leung, Ng, and Wong~\cite{LNW} studied surjective isometries
between the positive unit spheres of $C(X)$ and $C(Y)$, where $X$ and $Y$
are compact Hausdorff spaces, and showed that every such isometry is induced
by a homeomorphism between $X$ and $Y$.
In particular, it extends to a
surjective linear isometry between $C(X)$ and $C(Y)$.

Another class of mappings related to such rigidity and extension phenomena is
given by norm-additive maps. Let $(E,\|\cdot\|_E)$ and $(F,\|\cdot\|_F)$ be
normed spaces, and let $E_0 \subset E$ and $F_0 \subset F$. A map
$T \colon E_0 \to F_0$ is called \emph{norm-additive} if
\[
\|T(a)+T(b)\|_F
=\|a+b\|_E
\qquad (a,b\in E_0).
\]

Norm-additive maps on positive subsets of ordered Banach spaces have been
studied in several settings. 
For an ordered normed space $E$, we denote its positive cone by
$E^+:=\set{a\in E:a\geq0}$.
Zhang, Tsai, and Wong~\cite{ZTW_lattice} formulated a general
extension problem for such maps: whether every surjective norm-additive map
from $E^+$ onto $F^+$, where $E$ and $F$ are ordered Banach spaces,
extends to a linear map from $E$ onto $F$.
For $L^p$-spaces with $1<p\leq\infty$, they obtained linear isometric
extension results, whereas the case $p=1$ admits a counterexample.
They subsequently studied the corresponding problem for noncommutative
$L^p$-spaces in~\cite{ZTW_noncommutative}.
Moving from positive cones to positive unit spheres,
Hao, Dong, and Li~\cite{HDL} studied surjective norm-additive maps
between the positive unit spheres of $L^p$-spaces for $1<p<\infty$
and obtained a surjective isometric extension result.

In a different direction,
Chen, Dong, and Zheng~\cite{CDZ} established a
Banach--Stone type representation theorem for surjective norm-additive maps
between the maximal groups of positive continuous functions.
More recently,
Sun and Sun~\cite{SS} studied the extension problem for bijective
norm-additive maps on the positive unit spheres of $\ell_q(\ell_p)$ 
for
$1\leq p,q\leq\infty$.
They obtained linear isometric extension results in some cases and
counterexamples in certain endpoint cases.
These results indicate that norm-additive extension phenomena depend
sensitively on the geometry of the underlying positive set and that endpoint
exponents can exhibit genuinely different behavior.

We now turn to the function space considered in this paper. Let
$C^1(I)$ denote the space of all continuously differentiable real-valued
functions on the closed unit interval $I=[0,1]$. For
$1\leq p\leq\infty$, we equip $C^1(I)$ with the norm
\[
\Vp{f}
:=
\begin{cases}
\left(|f(0)|^p+\VI{f'}^p\right)^{1/p},
& 1\leq p<\infty,\\[1mm]
\max\{|f(0)|,\VI{f'}\},
& p=\infty,
\end{cases}
\qquad
(f\in C^1(I)).
\]
For $1\leq p\leq\infty$, let
\[
\Sp:=\set{f\in C^1(I):f(0)\geq0,\ f'\geq0,\ \Vp{f}=1}
\]
denote the corresponding positive unit sphere.
Of particular relevance to the present paper is a recent preprint by
Enami, Hirota, Koshimizu, and Lin~\cite{enam}.
They showed that, for every
$1\leq p\leq\infty$, each surjective isometry on $\Sp$ admits a surjective
linear isometric extension to $C^1(I)$. 
Motivated by this result and by
the extension theory of norm-additive maps described above, we investigate
surjective norm-additive maps
$T\colon\Sp\to\Sp$.
\Needspace{14\baselineskip}
Our main results are as follows.
\begin{thm}\label{thm1.1}
Let $1<p\leq\infty$.
Let $T\colon\Sp\to\Sp$ be a surjective norm-additive map.
Then there exists a homeomorphism $\tau\colon I\to I$ such that
\[
T(f)(t)
=
f(0)+\int_0^t f'(\tau(s))\,ds
\]
for all $f\in\Sp$ and $t\in I$.
Conversely, every map given by the above formula is a
surjective norm-additive map.
\end{thm}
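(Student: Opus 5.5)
We identify $\Sp$ with the set of pairs $(a,u)$, where $a=f(0)\ge0$, $u=f'\ge0$ in $C(I)$, and $\|(a,u)\|=\|(a,\VI{u})\|_{\ell_p^2}=1$. The statement then concerns surjective norm-additive maps on the positive sphere of the $\ell_p$-sum $\mathbb{R}\oplus_p C(I)$. The converse direction is routine. If $\tau$ is a homeomorphism, then $f\mapsto (f(0),f'\circ\tau)$ preserves $f(0)$ and the sup norm of the derivative of sums, so it preserves $\Vp{f+g}$. Surjectivity follows by using $\tau^{-1}$. So the work is the forward direction. The plan is to first show that $T$ preserves the $\mathbb{R}$-coordinate and the sup norm of the derivative, then locate points via peaking derivatives, and finally recover $T(f)'$ pointwise.

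\emph{Step 1 (maximal sums).} Apply norm-additivity with $f=g$: this only gives $\Vp{T(f)}=1$. The real information comes from the equality case $\Vp{f+g}=2$. For $1<p<\infty$, strict convexity of $\ell_p^2$ together with $\VI{f'+g'}\le\VI{f'}+\VI{g'}$ shows that $\Vp{f+g}=2$ iff $f(0)=g(0)$, $\VI{f'}=\VI{g'}$, and $f',g'$ attain their maxima at a common point. For $p=\infty$ the equality case splits into \lq\lq$f(0)=g(0)=1$\rq\rq\ or \lq\lq$\VI{f'}=\VI{g'}=1$ at a common maximum point\rq\rq. Consider the relation $f\sim g \iff \Vp{f+g}=2$, which $T$ preserves in both directions. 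Using it, I will characterize intrinsically the special element $e$ with $e(0)=1$, $e'=0$. For instance, for $p<\infty$, $e$ is the unique element whose $\sim$-class is $\{e\}$ and which nevertheless has $\Vp{e+g}$ maximal among \dots; for $p=\infty$ I would instead use the set $\{g: g(0)=1\}$, where $e$ is $\sim$-related to a maximal clique. From this I get $T(e)=e$. Then $\Vp{e+f}$ expressed through $(1+f(0))$ and $\VI{f'}$, combined with $(f(0))^p+\VI{f'}^p=1$, determines $f(0)$ for $p>1$. Hence $T(f)(0)=f(0)$ and $\VI{T(f)'}=\VI{f'}$.

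\emph{Step 2 (the homeomorphism).} For $t\in I$ and $0<c\le 1$, let $\mathcal{P}_{t,c}$ be the set of $f$ with $f(0)=(1-c^p)^{1/p}$ whose derivative attains its maximum $c$ at $t$. Maximal $\sim$-cliques inside the level set $\{f(0)=(1-c^p)^{1/p}\}$ are exactly the sets $\mathcal{P}_{t,c}$. This follows from a compactness/finite-intersection argument: a family of functions pairwise sharing maximum points, and closed under the structure, has a common maximum point. Therefore $T(\mathcal{P}_{t,c})=\mathcal{P}_{\sigma_c(t),c}$ for a bijection $\sigma_c$ of $I$. Comparing elements lying in $\mathcal{P}_{t,c}\cap\mathcal{P}_{t',c}$, i.e.\ derivatives with two maximum points, shows $\sigma_c$ is independent of $c$. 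Call it $\sigma$ and set $\tau=\sigma^{-1}$. Continuity follows because $t_n\to t$ iff there are derivatives with maxima at $t_n$ converging uniformly to one peaking at $t$; one can phrase this via norms of sums, which $T$ preserves. Being a bijection of compact $I$, $\tau$ is then a homeomorphism.

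\emph{Step 3 (pointwise recovery), the main obstacle.} Fix $f$ and $s\in I$, and put $t=\tau(s)$. Test against $g\in\Sp$ with $g(0)$ fixed and $g'$ a narrow bump of height $h$ centered at $t$. Taking the infimum of $\Vp{f+g}$ over ever narrower such bumps recovers
\[
\bigl((f(0)+g(0))^p+\max\{\VI{f'},\,h+f'(t)\}^p\bigr)^{1/p}.
\]
Varying $h$ then yields $f'(t)$. By Step 2, $T$ maps such a bump family to bump families at $s$, although not necessarily with the same shape. Hence the same infimum computed for $T(f)$ yields $T(f)'(s)$, giving $T(f)'(s)=f'(\tau(s))$. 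Integrating with $T(f)(0)=f(0)$ gives the formula. The delicate point is justifying that the infimum over the images of the narrow bumps still localizes at $s$. I would handle this by characterizing \lq\lq narrow bump at $t$ of height $h$\rq\rq\ purely through norm-additive data, namely $\mathcal{P}_{t,c}$ membership together with the values $\Vp{g+k}$ for $k$ peaking elsewhere. The case $p=1$, where Step 1 fails, is exactly where this breaks down.
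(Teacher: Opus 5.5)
Your converse and your Step~1 for $1<p<\infty$ agree with the paper. There, the fact that the $\sim$-class of $e$ is $\{e\}$ already characterizes $e=\mathbf 1$, so your trailing extra condition is unnecessary. Your level sets $\{f(0)=(1-c^p)^{1/p}\}$ correspond under $D_p$ to the spheres $\{u\in\B:\VI{u}=c\}$ of the paper's normalization $\Psi_r$.

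The genuine gap is in Step~2: it is false that the maximal $\sim$-cliques in a level set are exactly the sets $\mathcal P_{t,c}$. Take $0\le u_i\le c$ in $C(I)$ with $\{u_1=c\}=\{0,\frac12\}$, $\{u_2=c\}=\{\frac12,1\}$, $\{u_3=c\}=\{0,1\}$, and put $f_i(t)=(1-c^p)^{1/p}+\int_0^t u_i(s)\,ds$. The $f_i$ are pairwise $\sim$-related but share no maximum point, so any maximal clique containing them (Zorn) is not of the form $\mathcal P_{t,c}$. Pairwise intersection of maximum sets gives no finite intersection property. The closure that would rescue it (closure under averages) is not something $T$ is known to respect.

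The missing idea is to express that closure through star sets. Writing $M(u)=\{x:u(x)=\VI{u}\}$, one has $M(g')\subset M(f')$ iff $\{k:k\sim g\}\subset\{k:k\sim f\}$ (cf.\ Lemma~\ref{lem:maximum sets and star sets}), so $T$ preserves inclusions of maximum sets. The average of $f_1,\dots,f_n\in\mathcal P_{t,c}$ lies in $\mathcal P_{t,c}$ and has maximum set $\bigcap_j M(f_j')$. Hence $M\bigl(T(\tfrac1n\sum_j f_j)'\bigr)\subset\bigcap_j M(T(f_j)')$, which gives the finite intersection property and a common point (Lemma~\ref{lem:intersection of maximum sets}). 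The reverse inclusion uses $T^{-1}$, hence injectivity of $T$, which you never prove (Lemma~\ref{lem:injective}), together with separation of points (Lemma~\ref{lem3.9}).

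Your argument that $\sigma_c$ is independent of $c$ also cannot work. Elements of $\mathcal P_{t,c}\cap\mathcal P_{t',c}$ all sit at one level, and for $p<\infty$ the relation $f\sim g$ forces $f(0)=g(0)$, so nothing built from $\sim$ links different levels. The paper needs a genuine cross-level norm computation: the proof that $\tau_r=\tau_1$ using $g=r(\mathbf 1-f)$.

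Step~3 is left open exactly where you flag it, but it needs no control of the shape of $T(g)$, only a one-sided estimate. Given $f$, $t$ and $\epsilon>0$, take $g(0)=0$ and $g'$ a bump of height $1$ at $t$ supported in $\{f'<f'(t)+\epsilon\}$. Then $\Vp{f+g}^p<f(0)^p+(1+f'(t)+\epsilon)^p$. On the other side, with $s=\sigma(t)$, the facts $T(f)(0)=f(0)$, $T(g)(0)=0$ and $T(g)'(s)=1$ alone give $\Vp{T(f)+T(g)}^p\ge f(0)^p+(1+T(f)'(s))^p$. Hence $T(f)'(s)\le f'(\tau(s))$, and the same argument for $T^{-1}$ gives equality; this is how the paper proves \eqref{eq:mainformula}. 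Since only the level $c=1$ is used, independence of $c$ is unnecessary in your setup. Continuity of $\tau$ then follows from continuity of $T(f)'=f'\circ\tau$, rather than from your unsubstantiated uniform-convergence criterion.

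Finally, $p=\infty$ is not actually handled. First, $\mathcal P_{t,c}$ as defined is meaningless there. Second, $\VI{T(f)'}=\VI{f'}$ does not follow from $T(f)(0)=f(0)$ when $f(0)=1$. Third, $e$ cannot be characterized through $\sim$ at all: $e$ and every $g$ with $g(0)=1$, $\VI{g'}<1$ have the same $\sim$-neighbourhood $\{k:k(0)=1\}$, so swapping them preserves $\sim$. The paper sidesteps all of this by encoding $f$ as $\widehat f\in\SCL$ with $L=\{-1\}\cup I$. It then applies Theorem~\ref{Thm:auxiliary representation theorem} directly and uses that $-1$ is the only isolated point of $L$.
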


As a consequence, we obtain the following linear extension result.

\begin{cor}\label{cor1.2}
Let $1<p\leq\infty$.
Let $T\colon\Sp\to\Sp$ be a surjective norm-additive map.
Then $T$ extends to a surjective linear isometry from
$(C^1(I),\Vp{\cdot})$ onto
itself.
\end{cor}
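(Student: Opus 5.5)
The natural route is to derive Corollary~\ref{cor1.2} directly from Theorem~\ref{thm1.1}. Given a surjective norm-additive $T\colon\Sp\to\Sp$, Theorem~\ref{thm1.1} provides a homeomorphism $\tau\colon I\to I$ with $T(f)(t)=f(0)+\int_0^t f'(\tau(s))\,ds$ for every $f\in\Sp$. The plan is to use this same formula on all of $C^1(I)$. Define
\[
\widetilde{T}(f)(t):=f(0)+\int_0^t f'(\tau(s))\,ds\qquad (f\in C^1(I),\ t\in I).
\]
Since $f'\circ\tau$ is continuous on $I$, the fundamental theorem of calculus shows $\widetilde{T}(f)\in C^1(I)$, with $\widetilde{T}(f)(0)=f(0)$ and $\widetilde{T}(f)'=f'\circ\tau$. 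Both $f\mapsto f(0)$ and $f\mapsto f'\circ\tau$ are linear, so $\widetilde{T}$ is linear. It agrees with $T$ on $\Sp$ by construction.

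Next we check that $\widetilde{T}$ is an isometry. Because $\tau$ is a surjection of $I$ onto itself,
\[
\VI{f'\circ\tau}=\sup_{s\in I}|f'(\tau(s))|=\sup_{u\in I}|f'(u)|=\VI{f'}.
\]
The value at $0$ is also unchanged. Hence $\Vp{\widetilde{T}(f)}=\Vp{f}$ for every $1<p\le\infty$, directly from the definition of $\Vp{\cdot}$ in both the finite-$p$ and $p=\infty$ cases.

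For surjectivity, let $\tau^{-1}$ be the inverse homeomorphism. Given $g\in C^1(I)$, set
\[
f(t):=g(0)+\int_0^t g'(\tau^{-1}(u))\,du .
\]
Then $f\in C^1(I)$, $f(0)=g(0)$ and $f'\circ\tau=g'$, so $\widetilde{T}(f)=g$. Equivalently, the map built from $\tau^{-1}$ is a two-sided inverse of $\widetilde{T}$.

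All of the substance lies in Theorem~\ref{thm1.1}; the only points to check here are that $\widetilde{T}$ lands in $C^1(I)$ (continuity of $f'\circ\tau$) and that $\tau$ is surjective, so that the sup norm of the derivative is preserved. The hypothesis $p>1$ enters only through the appeal to Theorem~\ref{thm1.1}.
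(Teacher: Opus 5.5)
Your proposal is correct and follows essentially the paper's own route. The paper applies Theorem~\ref{thm1.1} to obtain $\tau$ and then invokes Proposition~\ref{prop:converse implication}, whose proof defines exactly your $\widetilde{T}$ (called $\widetilde U_\tau$) on $C^1(I)$, verifies linearity and norm preservation using the surjectivity of $\tau$, and gets surjectivity from $\widetilde U_{\tau^{-1}}$, just as you do.
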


The forward implication of Theorem~\ref{thm1.1} and the conclusion of
Corollary~\ref{cor1.2} both fail for $p=1$.
Indeed, we construct a surjective norm-additive map on
$\mathcal{S}_1^+$ which admits no linear extension to $C^1(I)$,
and hence cannot have the representation in Theorem~\ref{thm1.1}.

We briefly describe the main ideas of the proof. 
For $1<p<\infty$, the derivative map $D_p(f)=f'$ gives a bijection from
$\Sp$ onto $\B$. 
A key step is to prove that a surjective norm-additive map preserves the initial value $f(0)$. This makes it possible to transfer $T$ to a surjective norm-additive map on $\B$, to which an auxiliary Banach--Stone
type representation theorem for positive unit balls can be applied.
For $p=\infty$, we set $L:=\{-1\}\cup I$ and encode $f(0)$ and $f'$ as a single continuous function on $L$, thereby
identifying $\Sinf$ with $\SCL$.
An auxiliary Banach--Stone type representation theorem for positive unit
spheres can then be applied.

The remainder of this paper is organized as follows.
In
Section~\ref{section2}, we establish the auxiliary representation theorems
for surjective norm-additive maps on positive unit
spheres and positive unit balls of $C(K)$.
Since related representation results for norm-additive maps on positive
continuous functions are already known, notably~\cite{CDZ}, these theorems
are used here only as auxiliary tools for proving our main results on $\Sp$.
In Section~\ref{section3}, we prove the forward implication
of Theorem~\ref{thm1.1} for $1<p<\infty$. In Section~\ref{section4}, we
treat the case $p=\infty$. In Section~\ref{section5}, we prove the converse implication
and establish Corollary~\ref{cor1.2}. Finally, in Section~\ref{section6}, we
construct the counterexample for $p=1$.

\section{Auxiliary representation theorems for positive spheres and balls of $C(K)$}\label{section2}

In this section, we establish two auxiliary representation theorems
for surjective norm-additive maps on positive unit spheres and
positive unit balls of $C(K)$.
We first treat the positive unit sphere case and then derive the
corresponding result for the positive unit ball by a normalization
argument.

We write
\[
\SCK:=\{f\in C(K):f\geq0,\,\V{f}_K=1\}
\]
for the positive unit sphere of $C(K)$.

\subsection{The positive unit sphere case}

We first establish the main result of this subsection, namely,
a representation theorem for the positive unit sphere.
Chen, Dong, and Zheng~\cite[Theorem 2.6]{CDZ} proved a similar representation theorem for surjective norm-additive maps between the strictly positive unit spheres of \(C(X)\) and \(C(Y)\). Since the positive unit spheres considered here also contain functions having zeros, their result does not directly cover the present setting. We therefore give a proof adapted to our setting.

\begin{thm}\label{Thm:auxiliary representation theorem}
Let $X$ and $Y$ be compact Hausdorff spaces.
Let $\Phi\colon\SCX\to\SCY$
be a surjective norm-additive map.
Then there exists a homeomorphism
$\tau\colon\Y\to\X$ such that
$\Phi(f)(y)=f(\tau(y))$
for all $f\in\SCX$ and $y\in Y$.
\end{thm}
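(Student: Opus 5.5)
We will prove that $\Phi$ preserves, and reflects, the pointwise order structure that the norm of a sum can detect, and then read off a homeomorphism from the peak sets of functions. The basic observation is that for $f,g\in\SCX$ we have $\V{f+g}_X\le 2$, with equality if and only if $f$ and $g$ attain the value $1$ at a common point. Write $M(f):=f^{-1}(1)$ for the nonempty closed peak set of $f$. By norm-additivity, $M(f)\cap M(g)\neq\emptyset$ if and only if $M(\Phi(f))\cap M(\Phi(g))\neq\emptyset$. More quantitatively, for fixed $f$ the quantity $\V{f+g}_X-1$ as $g$ ranges over $\SCX$ encodes the values of $f$. Taking $g$ with $M(g)=\{x\}$ and $g$ small off a neighbourhood of $x$ gives $\V{f+g}$ close to $1+f(x)$. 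The relation $\V{f+f}=2$ also shows that $\V{\Phi(f)}$ is unaffected, consistent with the setup.

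\emph{Step 1 (points to points).} For $x\in X$, set $\mathcal{F}_x:=\{f\in\SCX: x\in M(f)\}$. We show that the families $\{M(\Phi(f)): f\in\mathcal{F}_x\}$ have the finite intersection property. Here norm-additivity alone yields only pairwise intersections, so we need triples as well. To get them, we use that $f_1,\dots,f_n\in\mathcal{F}_x$ satisfy $\V{(f_1+\cdots+f_n)/n}=1$. We argue instead with a single $h\in\mathcal{F}_x$ satisfying $h\le \min_i f_i$ near $x$ and $M(h)=\{x\}$-ish, and then show that $M(\Phi(h))\subset M(\Phi(f_i))$. This containment follows from the criterion
\[
M(f)\subset M(g)\iff \bigl(\forall k\in\SCX:\ M(k)\cap M(f)\ne\emptyset\Rightarrow M(k)\cap M(g)\neq\emptyset\bigr),
\]
which holds by Urysohn. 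Since the right-hand side is expressed purely through norms of sums and $\Phi$ is surjective, inclusion of peak sets is preserved in both directions. By compactness, $\bigcap_{f\in\mathcal{F}_x}M(\Phi(f))$ is nonempty. Using Urysohn functions peaking exactly on small neighbourhoods, we then show that this intersection is a single point, which we call $\sigma(x)$. Applying the same reasoning to the inverse direction gives a bijection. We then set $\tau:=\sigma^{-1}$, and we will check from the inclusion characterization that $M(\Phi(f))=\tau^{-1}(M(f))$.

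\emph{Step 2 (values).} We fix $y$ and $x=\tau(y)$, and choose peaking functions $g_n\in\mathcal{F}_x$ with supports shrinking to $x$. Then $\V{f+g_n}\to 1+f(x)$. On the other side, $\Phi(g_n)$ peaks exactly on $\tau^{-1}(M(g_n))$, but a priori $\Phi(g_n)$ may fail to be small elsewhere. This is the main obstacle: controlling $\Phi(g_n)$ away from its peak set. To handle it, we first show that $\Phi$ is injective, since $\V{\Phi f+\Phi g}=2$ characterizes $M$-intersection, not equality. We then determine $\Phi(f)$ at a point via
\[
f(x)=\inf\{\V{f+g}-1: g\in\mathcal{F}_x\},
\]
which holds because the infimum is attained in the limit by sharp peaks and $\V{f+g}\ge 1+f(x)$ for every $g\in\mathcal{F}_x$. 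Since $\Phi$ maps $\mathcal{F}_x$ onto $\mathcal{F}_{\tau^{-1}(x)}$ by Step 1 and surjectivity, we get
\[
\Phi(f)(y)=\inf_{g'\in\mathcal{F}_y}\V{\Phi f+g'}-1=\inf_{g\in\mathcal{F}_x}\V{f+g}-1=f(\tau(y)).
\]
This argument bypasses the obstacle entirely.

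\emph{Step 3 (continuity).} We show that $\tau$ is a homeomorphism. Since $y\mapsto f(\tau(y))=\Phi(f)(y)$ is continuous for every $f\in\SCX$, and $\SCX$ separates points and determines the topology of $X$ (by Urysohn, with normalized nonnegative functions), $\tau$ is continuous. As a continuous bijection between compact Hausdorff spaces, $\tau$ is a homeomorphism.

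The delicate parts are the set-theoretic Step 1, namely the peak-inclusion criterion and showing that the intersection is a singleton. We will try to settle these first, using Urysohn functions supported in arbitrarily small neighbourhoods.
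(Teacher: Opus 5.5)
Your proposal is correct and follows essentially the paper's route. Your peak-set inclusion criterion is Lemma~\ref{lem:maximum sets and star sets} (phrased there via star sets), your finite-intersection step for $\mathcal{F}_x=P_X(x)$ is Lemmas~\ref{lem:intersection of maximum sets}--\ref{lem3.9}, and your one-stroke comparison $\Phi(f)(y)=\inf_{g'\in P_Y(y)}\|\Phi(f)+g'\|_Y-1=\inf_{g\in P_X(\tau(y))}\|f+g\|_X-1=f(\tau(y))$, resting on Lemma~\ref{lem:pointwise norm estimate} and $\Phi(P_X(\tau(y)))=P_Y(y)$, just packages the paper's two one-sided inequalities for $\Phi$ and $\Phi^{-1}$.

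Two things need tidying. In the finite-intersection step, take $h=\min_i f_i$ (or the average, as the paper does) instead of aiming for $M(h)=\{x\}$, which is impossible when $x$ is not a $G_\delta$-point. Also, your use of $\Phi^{-1}$ in Step~1 presupposes injectivity, which follows at once from your infimum formula: $\Phi(f)=\Phi(g)$ forces $\|f+k\|_X=\|g+k\|_X$ for all $k$, and this is exactly Lemma~\ref{lem:injective}.
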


Throughout the remainder of this subsection, let $X$ and $Y$ be
compact Hausdorff spaces, and let
$\Phi\colon\SCX\to\SCY$ be a surjective norm-additive map.
That is,
\[
\VY{\Phi(f)+\Phi(g)}=\VX{f+g}
\qquad (f,g\in\SCX).
\]

For $f\in\SCK$, set
\[
\GK{f}:=\{g\in\SCK:\VK{f+g}=2\},
\]
which we call the \emph{star set} of $f$ in $\SCK$.
We first record a basic lemma on surjective norm-additive maps.

\begin{lem}\label{lem:star-set-preservation}
For every $f\in\SCX$,
$\Phi(\GX{f})=\GY{\Phi(f)}$.
\end{lem}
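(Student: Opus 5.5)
Both inclusions follow almost directly from the norm-additivity identity $\VY{\Phi(f)+\Phi(g)}=\VX{f+g}$ with $f,g\in\SCX$, together with the surjectivity of $\Phi$. The argument works purely on the level of the defining condition $\VK{f+g}=2$ of the star set, so no structure of $C(K)$ beyond the definition of $\GK{\cdot}$ is needed.

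\emph{Inclusion $\Phi(\GX{f})\subset\GY{\Phi(f)}$.} Let $g\in\GX{f}$, so that $g\in\SCX$ and $\VX{f+g}=2$. Norm-additivity gives
\[
\VY{\Phi(f)+\Phi(g)}=\VX{f+g}=2 .
\]
Since $\Phi(g)\in\SCY$, this means $\Phi(g)\in\GY{\Phi(f)}$.

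\emph{Inclusion $\GY{\Phi(f)}\subset\Phi(\GX{f})$.} Let $h\in\GY{\Phi(f)}$, so that $h\in\SCY$ and $\VY{\Phi(f)+h}=2$. By surjectivity of $\Phi$, choose $g\in\SCX$ with $\Phi(g)=h$. Then
\[
\VX{f+g}=\VY{\Phi(f)+\Phi(g)}=\VY{\Phi(f)+h}=2,
\]
so $g\in\GX{f}$, and therefore $h=\Phi(g)\in\Phi(\GX{f})$.

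There is no real obstacle here. The only point to watch is that the second inclusion genuinely uses surjectivity: it is what lets us pull $h$ back to some $g$. Injectivity of $\Phi$ is not needed, since any preimage $g$ of $h$ works.
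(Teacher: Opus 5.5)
Your proof is correct and follows the paper's argument essentially line for line: the forward inclusion comes directly from norm-additivity, and the reverse inclusion pulls $h$ back via surjectivity and then applies norm-additivity. Your remark that injectivity is not needed is also accurate, which is consistent with the paper proving injectivity of $\Phi$ only afterwards, in Lemma~\ref{lem:injective}.
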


\begin{proof}
Let $u\in\Phi(\GX{f})$.
There exists $g\in\GX{f}$ such that
$u=\Phi(g)$.
By the definition of $\GX{f}$,
we have $\VX{f+g}=2$.
Since $\Phi$ is norm-additive,
\[
\V{\Phi(f)+u}_Y
=\V{\Phi(f)+\Phi(g)}_Y
=\V{f+g}_X=2.
\]
Thus $u\in\GY{\Phi(f)}$.
Since $u\in\Phi(\GX{f})$ was arbitrary,
we conclude that
$\Phi(\GX{f})\subset\GY{\Phi(f)}$.

Conversely, let $v\in\GY{\Phi(f)}$.
By definition, $v\in\SCY$ with
$\V{\Phi(f)+v}_Y=2$.
Since $\Phi\colon\SCX\to\SCY$ is surjective,
there exists $h\in\SCX$ such that
$\Phi(h)=v$.
The norm-additivity of $\Phi$ gives
\[
\V{f+h}_X
=\V{\Phi(f)+\Phi(h)}_Y
=\V{\Phi(f)+v}_Y=2,
\]
and hence $h\in\GX{f}$.
This implies $v=\Phi(h)\in\Phi(\GX{f})$.
Since $v\in\GY{\Phi(f)}$ was arbitrary,
we conclude that
$\GY{\Phi(f)}\subset\Phi(\GX{f})$.
Therefore $\Phi(\GX{f})=\GY{\Phi(f)}$.
\end{proof}

We next establish several preliminary lemmas needed for the proof of
Theorem~\ref{Thm:auxiliary representation theorem}.

\begin{lem}\label{lem:pointwise norm estimate}
For every $f\in\SCX$, $x_0\in X$, and $\epsilon>0$, there exists $h\in\SCX$ such that
$h(x_0)=1$ and
$\V{f+h}_X
<
f(x_0)+\epsilon+1$.
\end{lem}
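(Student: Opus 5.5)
We construct $h$ as a continuous peak function at $x_0$ whose support lies in a neighbourhood of $x_0$ on which $f$ stays below $f(x_0)+\epsilon$.

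First, by continuity of $f$ at $x_0$, we choose an open neighbourhood $U$ of $x_0$ such that
\[
f(x)<f(x_0)+\epsilon \qquad (x\in U).
\]
Since $X$ is compact Hausdorff, hence normal, Urysohn's lemma gives a continuous $h\colon X\to[0,1]$ with $h(x_0)=1$ and $h=0$ on the closed set $X\setminus U$. Then $h\ge0$ and $\VX{h}=1$, because $h$ takes values in $[0,1]$ and attains $1$ at $x_0$. Thus $h\in\SCX$ and $h(x_0)=1$.

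Next we estimate $f+h$ pointwise. For $x\in U$ we have $f(x)+h(x)<f(x_0)+\epsilon+1$, since $h(x)\le1$. For $x\notin U$ we have $f(x)+h(x)=f(x)\le1$, and $1<f(x_0)+\epsilon+1$ because $f(x_0)\ge0$ and $\epsilon>0$. Hence $f(x)+h(x)<f(x_0)+\epsilon+1$ for every $x\in X$.

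A pointwise strict inequality does not by itself give a strict inequality for the supremum. Here, however, $f+h$ is continuous on the compact space $X$, so its supremum is attained at some point. Therefore
\[
\VX{f+h}=\max_{x\in X}\bigl(f(x)+h(x)\bigr)<f(x_0)+\epsilon+1.
\]
The only step needing care is this passage from the pointwise bound to the norm, and compactness is what makes the inequality strict; the rest is routine.
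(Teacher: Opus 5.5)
Your proof is correct and follows essentially the same route as the paper. The paper takes the explicit neighbourhood $\{x\in X: f(x)<f(x_0)+\epsilon\}$, applies Urysohn's lemma, bounds $f+h$ pointwise in the same two cases, and uses compactness so that the supremum is attained and the strict inequality carries over to $\VX{f+h}$.
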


\begin{proof}
Define
$O:=\{x\in X:f(x)<f(x_0)+\epsilon\}$.
Then $O$ is an open neighborhood of $x_0$.
By Urysohn's lemma, there exists
$h\in \SCX$ such that
$h(x_0)=1$ and $h=0$ on $X\setminus O$.
We claim that
\[
\VX{f+h}<f(x_0)+\epsilon+1.
\]
Let $x\in X$ be arbitrary.
Since $f+h$ attains its maximum
on the compact space $X$,
it is enough to prove $f(x)+h(x)<f(x_0)+\epsilon+1$.
If $x\in O$, then
$f(x)<f(x_0)+\epsilon$.
Since $h(x)\le1$, we obtain
$f(x)+h(x)<f(x_0)+\epsilon+1$.
If $x\notin O$, then
$h(x)=0$
and $f(x)\leq1$. 
Hence
$f(x)+h(x)<f(x_0)+\epsilon+1$.
Therefore
$f(x)+h(x)<f(x_0)+\epsilon+1$
for all $x\in X$
as desired.
\end{proof}

As a first consequence of Lemma~\ref{lem:pointwise norm estimate},
we obtain the injectivity of $\Phi$.

\begin{lem}\label{lem:injective}
The map $\Phi$ is injective.
\end{lem}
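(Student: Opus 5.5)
The plan is to show that $\Phi(f_1)=\Phi(f_2)$ forces $f_1(x)=f_2(x)$ at every point $x\in X$. We will recover each value $f(x_0)$ from norm data of the form $\VX{f+h}$, and these quantities are preserved by $\Phi$. The key identity we aim for is
\[
f(x_0)+1=\inf\set{\VX{f+h}: h\in\SCX,\ h(x_0)=1}.
\]
The inequality "$\le$" is immediate, since $\VX{f+h}\ge f(x_0)+h(x_0)=f(x_0)+1$. The inequality "$\ge$" is exactly Lemma~\ref{lem:pointwise norm estimate}.

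However, the constraint $h(x_0)=1$ is not visibly preserved by $\Phi$. So we will instead express the value through an infimum taken over a class that can be described purely in terms of norms of sums. The first attempt is
\[
f(x_0)+1=\inf_{h\in H}\VX{f+h},
\]
where $H$ is a family not depending on $f$. A natural candidate is to fix an auxiliary peaking function. For any $g\in\SCX$ with $g(x_0)=1$, every $h$ with $h(x_0)=1$ satisfies $\VX{g+h}=2$, so $h\in\GX{g}$. Thus $\set{h: h(x_0)=1}\subset\bigcap\set{\GX{g}: g\in\SCX,\ g(x_0)=1}$.

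Conversely, suppose $h$ lies in all such $\GX{g}$. If $h(x_0)<1$, then $h<1-\delta$ on a neighborhood $O$ of $x_0$. Choosing by Urysohn a $g\in\SCX$ with $g(x_0)=1$ supported in $O$ and with $g<1$ off a smaller set gives $\VX{g+h}<2$, a contradiction. Some care is needed for points where $g$ is close to $1$: take $g$ with $g=0$ off $O$, so that on $O$ we have $g+h<2-\delta$ and off $O$ we have $g+h\le 1$. Hence
\[
\set{h: h(x_0)=1}=\bigcap\set{\GX{g}: g(x_0)=1}.
\]

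Now suppose $\Phi(f_1)=\Phi(f_2)$ but $f_1(x_0)<f_2(x_0)$ for some $x_0$. By Lemma~\ref{lem:pointwise norm estimate} there is $h$ with $h(x_0)=1$ and $\VX{f_1+h}<f_2(x_0)+1$. Norm-additivity then gives
\[
\VX{f_2+h}=\V{\Phi(f_2)+\Phi(h)}_Y=\V{\Phi(f_1)+\Phi(h)}_Y=\VX{f_1+h}<f_2(x_0)+1.
\]
But $\VX{f_2+h}\ge f_2(x_0)+h(x_0)=f_2(x_0)+1$, which is a contradiction. So this direct argument works without the star-set characterization at all. The main potential obstacle was transporting the constraint $h(x_0)=1$ across $\Phi$. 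It disappears because the same $h$ is used on both sides and only the norm $\VX{f_i+h}$ passes through $\Phi$. We therefore conclude $f_1\le f_2$, and by symmetry $f_1=f_2$. The proof will be written in this short form.
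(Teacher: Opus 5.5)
Your final argument is correct and is essentially the paper's proof. At a point $x_0$ with $f_1(x_0)<f_2(x_0)$, Lemma~\ref{lem:pointwise norm estimate} gives $h$ with $h(x_0)=1$ and $\VX{f_1+h}<f_2(x_0)+1\le\VX{f_2+h}$, which norm-additivity forbids if $\Phi(f_1)=\Phi(f_2)$; the paper argues the same way, phrased contrapositively with $\epsilon$ taken as half the gap, so the star-set detour can simply be dropped (and the contradiction yields $f_2\le f_1$ rather than $f_1\le f_2$, a harmless slip by symmetry).
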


\begin{proof}
Let $f,g\in \SCX$ be such that $f\neq g$. Then there exists
$x_0\in X$ such that $f(x_0)\neq g(x_0)$.
Without loss of generality, we may assume that
$f(x_0)<g(x_0)$.
Set
\[
\epsilon=\frac{g(x_0)-f(x_0)}2.
\]
Then $\epsilon>0$.
By Lemma~\ref{lem:pointwise norm estimate}, there exists $h\in \SCX$ such that $h(x_0)=1$ and  
\[
\V{f+h}_X
<f(x_0)+\epsilon+1.
\]
By the definition of $\epsilon$,
$g(x_0)=f(x_0)+2\epsilon$.
Since $h(x_0)=1$, we have 
\[
f(x_0)+2\epsilon+1
=g(x_0)+h(x_0)
\leq\V{g+h}_X.
\]
Combining the preceding inequalities gives
$\V{f+h}_X<f(x_0)+\epsilon+1<\V{g+h}_X$.
Since $\Phi$ is norm-additive, we obtain
\[
\V{\Phi(f)+\Phi(h)}_Y
=\V{f+h}_X
<\V{g+h}_X
=\V{\Phi(g)+\Phi(h)}_Y.
\]
This implies $\Phi(f)\neq\Phi(g)$,
proving the injectivity of $\Phi$.
\end{proof}

By Lemma~\ref{lem:injective}, $\Phi$ is a bijective norm-additive map.
Thus $\Phii$ is also a norm-additive map.
To extract information about the underlying compact spaces from
norm-additivity, we next examine how the condition
$\V{f+g}_X=2$ is related to the points at which functions
attain their norm.

For each $f\in \SCK$, define
\[
M_K(f):=\{x\in K:f(x)=1\}.
\]
We call $M_K(f)$ the \emph{maximum set} of $f$.

We next clarify the relation between the maximum sets $M_K(f)$
and the star sets $\GK{f}$.

\begin{lem}\label{lem:maximum sets and star sets}
Let $f,g \in \SCK$. Then $\MK{g}\subset\MK{f}$ if and only if $\GK{g} \subset\GK{f}$.
\end{lem}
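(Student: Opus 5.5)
The plan rests on one basic observation: for $f,g\in\SCK$, $\VK{f+g}=2$ holds if and only if $\MK{f}\cap\MK{g}\neq\emptyset$. Since $0\le f,g\le 1$, we have $f(x)+g(x)=2$ exactly when $f(x)=g(x)=1$. Moreover, $f+g$ attains its maximum on the compact space $K$. So the norm equals $2$ precisely when there is a common peak point. Hence
\[
\GK{f}=\set{h\in\SCK:\MK{h}\cap\MK{f}\neq\emptyset}.
\]
Note also that $\MK{f}$ is nonempty and closed for every $f\in\SCK$.

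The forward implication is then immediate. Suppose $\MK{g}\subset\MK{f}$ and $h\in\GK{g}$. Then $h$ shares a peak point with $g$, and that point lies in $\MK{f}$, so $h\in\GK{f}$.

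For the converse, I will argue by contraposition. Suppose there is $x_0\in\MK{g}\setminus\MK{f}$, so $f(x_0)<1$. I will build $h\in\SCK$ with $h\in\GK{g}$ but $h\notin\GK{f}$, that is, $\MK{h}\cap\MK{f}=\emptyset$ and $x_0\in\MK{h}$. Since $\MK{f}$ is closed and does not contain $x_0$, Urysohn's lemma (compact Hausdorff implies normal) gives a continuous $h\colon K\to[0,1]$ with $h(x_0)=1$ and $h=0$ on $\MK{f}$.

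This $h$ lies in $\SCK$. It satisfies $\VK{g+h}\ge g(x_0)+h(x_0)=2$, so $h\in\GK{g}$. On the other hand, $\MK{h}\cap\MK{f}=\emptyset$ because $h$ vanishes on $\MK{f}$, so $\VK{f+h}<2$ and $h\notin\GK{f}$. This contradicts $\GK{g}\subset\GK{f}$.

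Alternatively, Lemma~\ref{lem:pointwise norm estimate} with $\epsilon<1-f(x_0)$ produces such an $h$ directly. There is no real obstacle here. The only point needing care is the characterization of the star set, which uses attainment of the maximum on compact $K$ to pass from $\VK{f+g}=2$ to the existence of a common peak point.
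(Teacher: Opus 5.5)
Your proof is correct and follows essentially the same route as the paper. The forward direction locates a common peak point of $g$ and $h$ inside $\MK{f}$, and the converse is proved by contraposition with an Urysohn function $h$ satisfying $h(x_0)=1$ and $h=0$ on $\MK{f}$; your preliminary characterization $\GK{f}=\set{h\in\SCK:\MK{h}\cap\MK{f}\neq\emptyset}$ simply packages the pointwise case analysis that the paper carries out inline.
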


\begin{proof}
Suppose that $\MK{g}\subset \MK{f}$. We prove that $\GK{g}\subset \GK{f}$.
Fix $h\in \GK{g}$ arbitrarily. Then $\VK{g+h}=2$, so there exists $x_0\in K$ such that
\[
g(x_0)+h(x_0)=2.
\]
Since $g,h\in \SCK$, we obtain $g(x_0)=h(x_0)=1$. Thus $x_0\in \MK{g}$.
Because $\MK{g}\subset \MK{f}$, we have $x_0\in \MK{f}$, and hence $f(x_0)=1$. Therefore
\[
2=f(x_0)+h(x_0)\le \VK{f+h}\le 2.
\]
Hence $\VK{f+h}=2$, which shows that $h\in \GK{f}$. Since $h\in \GK{g}$ was arbitrary, we conclude that $\GK{g}\subset \GK{f}$.

Conversely, suppose that $\MK{g}\not\subset \MK{f}$. We prove that $\GK{g}\not\subset \GK{f}$.
Choose $x_1\in \MK{g}\setminus \MK{f}$. Then $f(x_1)<1=g(x_1)$.
By Urysohn's lemma, there exists $h_1\in \SCK$ such that $h_1(x_1)=1$ and $h_1=0$ on $\MK{f}$.
Then
\[
2=g(x_1)+h_1(x_1)\le \VK{g+h_1}\le 2,
\]
and thus $\V{g+h_1}_K=2$.
Hence $h_1\in \GK{g}$.
We now show that $h_1\notin \GK{f}$.
That is, $\V{f+h_1}_K<2$.
Let $x\in K$ be arbitrary.
If $x\in \MK{f}$, then
$h_1(x)=0$, which yields
$f(x)+h_1(x)=f(x)<2$.
If $x\in K\setminus\MK{f}$,
then $f(x)<1$ and $h_1(x)\leq1$,
which shows that
$f(x)+h_1(x)<2$.
Therefore $f(x)+h_1(x)<2$ for every $x\in K$.
Thus $\V{f+h_1}_K<2$,
and so $h_1\notin \GK{f}$.
Since $h_1\in \GK{g}$, we conclude that $\GK{g}\not\subset \GK{f}$.
This proves that $\GK{g}\subset \GK{f}$ implies $\MK{g}\subset \MK{f}$.
\end{proof}

The preceding characterization allows us to show that
inclusions of maximum sets are preserved under $\Phi$.

\begin{lem}\label{lem:inclusion of maximum sets}
Let $f,g\in\SCX$.
If $\MX{g}\subset\MX{f}$, then
$\MY{\Phi(g)}\subset\MY{\Phi(f)}$.
\end{lem}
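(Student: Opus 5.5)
The plan is to read off the statement by combining the two characterizations already available. Let $f,g\in\SCX$ with $\MX{g}\subset\MX{f}$. Applying Lemma~\ref{lem:maximum sets and star sets} in the space $K=X$ gives $\GX{g}\subset\GX{f}$. Taking images under $\Phi$ preserves this inclusion, so
\[
\Phi(\GX{g})\subset\Phi(\GX{f}).
\]

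By Lemma~\ref{lem:star-set-preservation}, the two sides are exactly $\GY{\Phi(g)}$ and $\GY{\Phi(f)}$, respectively. Hence $\GY{\Phi(g)}\subset\GY{\Phi(f)}$. Note that $\Phi(f),\Phi(g)\in\SCY$, so Lemma~\ref{lem:maximum sets and star sets} applies in $K=Y$, and its reverse implication yields $\MY{\Phi(g)}\subset\MY{\Phi(f)}$, as required.

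No step here is a genuine obstacle. The substantive work is in the two cited lemmas: the Urysohn construction in Lemma~\ref{lem:maximum sets and star sets} and the use of surjectivity in Lemma~\ref{lem:star-set-preservation}. The only point to check is that Lemma~\ref{lem:star-set-preservation} gives equality of sets rather than a one-sided inclusion, since we need the images of the star sets to coincide with the star sets in $Y$.

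Because Lemma~\ref{lem:injective} shows $\Phi$ is bijective and $\Phii$ is again a surjective norm-additive map, the same argument applied to $\Phii$ gives the converse. So $\Phi$ in fact preserves and reflects inclusion of maximum sets, which is presumably how the lemma will be used later.
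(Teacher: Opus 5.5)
Your proposal is correct and follows the paper's proof essentially verbatim: Lemma~\ref{lem:maximum sets and star sets} in $X$, then the equality from Lemma~\ref{lem:star-set-preservation} to transport the star-set inclusion to $Y$, then Lemma~\ref{lem:maximum sets and star sets} again in $Y$. Your closing remark is also sound: since $\Phi^{-1}$ is a surjective norm-additive map, the same argument shows that $\Phi$ reflects inclusion of maximum sets as well.
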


\begin{proof}
Suppose that $\MX{g}\subset\MX{f}$.
By Lemma~\ref{lem:maximum sets and star sets}, we have
$\GX{g}\subset\GX{f}$.
Hence,
\[
\Phi(\GX{g})\subset\Phi(\GX{f}).
\]
By Lemma~\ref{lem:star-set-preservation}
we obtain
$\Phi(\GX{h})=\GY{\Phi(h)}$
for $h\in\SCX$.
Combining this with the inclusion above, we obtain
$\GY{\Phi(g)}\subset\GY{\Phi(f)}$.
Therefore Lemma~\ref{lem:maximum sets and star sets} yields
$\MY{\Phi(g)}\subset \MY{\Phi(f)}$.
\end{proof}

Having established the behavior of maximum sets under $\Phi$,
we now use this information to recover the points of the underlying
compact spaces.

For each $x_0\in K$, define
\[
P_K(x_0):=\{f\in \SCK:f(x_0)=1\}.
\]

\begin{lem}\label{lem:peak sets}
Let $x_0,x_1\in\K$.
If $\PK{x_0}\subset\PK{x_1}$,
then $x_0=x_1$.
\end{lem}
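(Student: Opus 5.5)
We argue by contraposition: assuming $x_0\neq x_1$, we will produce a function lying in $\PK{x_0}$ but not in $\PK{x_1}$, which contradicts $\PK{x_0}\subset\PK{x_1}$. The only tool needed is Urysohn's lemma, exactly as it was used in the proofs of Lemma~\ref{lem:pointwise norm estimate} and Lemma~\ref{lem:maximum sets and star sets}.

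Concretely, suppose $x_0\neq x_1$. Since $K$ is compact Hausdorff, it is normal, and the singletons $\{x_0\}$ and $\{x_1\}$ are disjoint closed subsets of $K$. Urysohn's lemma then supplies a continuous function $h\colon K\to[0,1]$ with $h(x_0)=1$ and $h(x_1)=0$. This $h$ satisfies $h\geq0$, and $\VK{h}=1$ because $h$ takes values in $[0,1]$ and attains the value $1$ at $x_0$. Hence $h\in\SCK$, and since $h(x_0)=1$, we have $h\in\PK{x_0}$.

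By the hypothesis $\PK{x_0}\subset\PK{x_1}$, it follows that $h\in\PK{x_1}$, that is, $h(x_1)=1$. This contradicts $h(x_1)=0$, so we must have $x_0=x_1$.

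There is no genuine obstacle in this argument. The only point requiring care is to check that the Urysohn function really belongs to $\SCK$, namely that it is nonnegative and has norm exactly $1$. Both properties follow at once because $h$ takes values in $[0,1]$ and attains $1$.
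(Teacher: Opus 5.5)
Your proof is correct and follows the paper's argument: assuming $x_0\neq x_1$, Urysohn's lemma gives a function in $\SCK$ equal to $1$ at $x_0$ and $0$ at $x_1$, so it lies in $\PK{x_0}\setminus\PK{x_1}$. Your explicit check that this function is nonnegative with norm exactly $1$ (so it belongs to $\SCK$) is a step the paper leaves implicit.
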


\begin{proof}
Suppose that $x_0\neq x_1$.
By Urysohn's lemma, there exists
$f_0\in\SCK$ such that
$f_0(x_0)=1$ and $f_0(x_1)=0$.
Hence
$f_0\in\PK{x_0}$, but
$f_0\notin\PK{x_1}$.
Therefore
$f_0\in\PK{x_0}\setminus\PK{x_1}$,
which implies that
$\PK{x_0}\not\subset\PK{x_1}$.
This shows that $\PK{x_0}\subset\PK{x_1}$
implies $x_0=x_1$.
\end{proof}

We now aim to associate with each point $x_0\in X$ a corresponding
point of $Y$ through the map $\Phi$.

\begin{lem}\label{lem:intersection of maximum sets}
The intersection
\[
\bigcap_{f\in\PX{x_0}}\MY{\Phi(f)}
\]
is non-empty for each $x_0\in X$.
\end{lem}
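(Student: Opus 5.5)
We argue by compactness of $Y$. Each $\MY{\Phi(f)}$ is a closed subset of $Y$, being the preimage of $\{1\}$ under the continuous function $\Phi(f)$. It is also non-empty, since $\Phi(f)\in\SCY$ attains its norm $1$ on the compact space $Y$. It therefore suffices to check the finite intersection property: for any $f_1,\dots,f_n\in\PX{x_0}$ we want $\bigcap_{i=1}^n\MY{\Phi(f_i)}\neq\emptyset$. Compactness of $Y$ then gives that the whole intersection is non-empty.

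To verify the finite intersection property, we produce a single function whose maximum set sits inside all the $\MX{f_i}$. Set
\[
g:=f_1f_2\cdots f_n .
\]
Then $g\ge0$ and $g\le1$. Moreover $g(x_0)=1$, because every $f_i(x_0)=1$, so $g\in\SCX$ and $g\in\PX{x_0}$. If $g(x)=1$, then each factor lies in $[0,1]$, which forces $f_i(x)=1$ for all $i$. Hence $\MX{g}=\bigcap_i\MX{f_i}\subset\MX{f_i}$ for each $i$. (One could instead take $\min_i f_i$, or an Urysohn function peaking only near $x_0$.)

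By Lemma~\ref{lem:inclusion of maximum sets}, $\MY{\Phi(g)}\subset\MY{\Phi(f_i)}$ for every $i$. Consequently
\[
\emptyset\neq\MY{\Phi(g)}\subset\bigcap_{i=1}^n\MY{\Phi(f_i)},
\]
where the left-hand set is non-empty because $\Phi(g)\in\SCY$.

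There is no real obstacle here. The only point to check is that the auxiliary function $g$ genuinely lies in $\SCX$ with $\MX{g}$ contained in each $\MX{f_i}$, so that Lemma~\ref{lem:inclusion of maximum sets} applies. After that, the finite intersection property combined with the compactness of $Y$ finishes the proof.
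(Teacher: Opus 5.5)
Your proof is correct and follows the paper's argument: you establish the finite intersection property and invoke compactness of $Y$, applying Lemma~\ref{lem:inclusion of maximum sets} to an auxiliary $g\in\PX{x_0}$ whose maximum set lies in every $\MX{f_i}$. The paper uses the average $\frac1n\sum_j f_j$ where you use the product, and both choices (as well as $\min_i f_i$) work for the same reason. Only your parenthetical ``Urysohn function peaking only near $x_0$'' would need extra care, since a generic such function need not have its maximum set inside $\bigcap_i\MX{f_i}$; your main argument does not rely on it.
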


\begin{proof}
Fix $x_0\in X$. We show that the family
$\{\MY{\Phi(f)}: f\in\PX{x_0}\}$
has the finite intersection property. Let $f_1,\dots,f_n\in\PX{x_0}$ be arbitrary, and define
\[
g:=\frac1n\sum_{j=1}^n f_j.
\]
Since $0\le f_j\le1$ and $f_j(x_0)=1$ for all $1\le j\le n$, we have
$0\le g\le1$ and $g(x_0)=1$. Hence $g\in\SCX$.
We claim that
\[
\MX{g}\subset \MX{f_j}
\qquad (1\le j\le n).
\]
Indeed, let $x\in\MX{g}$. Then
\[
1=g(x)=\frac1n\sum_{j=1}^n f_j(x).
\]
Since $0\le f_j(x)\le1$, we obtain
$f_j(x)=1$ for every
$1\le j\le n$.
Thus $x\in\MX{f_j}$ for all $1\le j\le n$.
Since $x\in\MX{g}$ was arbitrary,
we conclude $\MX{g}\subset\MX{f_j}$
for every $1\leq j\leq n$ as desired.

By the result proved in the preceding paragraph,
we may apply Lemma~\ref{lem:inclusion of maximum sets}
to obtain
\[
\MY{\Phi(g)}\subset\MY{\Phi(f_j)}
\qquad (1\le j\le n).
\]
Therefore
\[
\MY{\Phi(g)}\subset \bigcap_{j=1}^n \MY{\Phi(f_j)}.
\]
Since $\Phi(g)\in \SCY$, we have
$\MY{\Phi(g)}\neq\emptyset$. Hence
$\bigcap_{j=1}^n \MY{\Phi(f_j)}\neq\emptyset$.
This shows that the family $\{\MY{\Phi(f)}: f\in\PX{x_0}\}$ has the finite intersection property. Because each $\MY{\Phi(f)}$ is closed and $Y$ is compact, we conclude that
$\bigcap_{f\in\PX{x_0}}\MY{\Phi(f)}\neq\emptyset$.
\end{proof}

We are now in a position to construct a correspondence between the
underlying points of $X$ and $Y$ through $\Phi$.

\begin{lem}\label{lem3.9}
For each $\xz\in\X$ there exists a unique
$\yz\in\Y$ such that
\[
\Phi(\PX{\xz})=\PY{\yz}.
\]
\end{lem}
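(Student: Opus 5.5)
Fix $\xz\in\X$. By Lemma~\ref{lem:intersection of maximum sets} we may choose $\yz\in\bigcap_{f\in\PX{\xz}}\MY{\Phi(f)}$. By definition this means $\Phi(f)(\yz)=1$ for every $f\in\PX{\xz}$, so $\Phi(\PX{\xz})\subset\PY{\yz}$. The rest of the proof upgrades this inclusion to an equality and then shows that $\yz$ is unique.

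\textbf{Reverse inclusion.} We exploit the symmetry of the situation. By Lemma~\ref{lem:injective}, $\Phi$ is bijective, and $\Phii\colon\SCY\to\SCX$ is again a surjective norm-additive map. Every lemma of this subsection was proved for an arbitrary surjective norm-additive map between positive unit spheres, so each applies verbatim to $\Phii$. In particular, Lemma~\ref{lem:intersection of maximum sets} applied to $\Phii$ and the point $\yz$ produces some $\xo\in\X$ with $\Phii(\PY{\yz})\subset\PX{\xo}$. Combining this with the first inclusion gives
\[
\PX{\xz}=\Phii(\Phi(\PX{\xz}))\subset\Phii(\PY{\yz})\subset\PX{\xo}.
\]
Lemma~\ref{lem:peak sets} now forces $\xo=\xz$. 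Hence $\Phii(\PY{\yz})\subset\PX{\xz}$, that is, $\PY{\yz}\subset\Phi(\PX{\xz})$, and so $\Phi(\PX{\xz})=\PY{\yz}$.

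\textbf{Uniqueness.} Suppose $\yo\in\Y$ also satisfies $\Phi(\PX{\xz})=\PY{\yo}$. Then $\PY{\yz}=\PY{\yo}$, and Lemma~\ref{lem:peak sets} applied in $\Y$ gives $\yz=\yo$.

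The only step that needs care is the symmetry claim: one must confirm that no earlier lemma used anything specific to $\Phi$ beyond surjectivity and norm-additivity. Lemma~\ref{lem:star-set-preservation} through Lemma~\ref{lem:intersection of maximum sets} rely only on these two properties, and $\Phii$ has both, so the argument goes through.
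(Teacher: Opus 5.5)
Your proof is correct and follows the paper's argument essentially step for step. You choose $y_0$ via Lemma~\ref{lem:intersection of maximum sets}, apply the same argument to $\Phi^{-1}$ to get some $x_1$, use Lemma~\ref{lem:peak sets} to force $x_1=x_0$, and prove uniqueness the same way; the only cosmetic difference is that you pull the first inclusion back through $\Phi^{-1}$, whereas the paper pushes the second inclusion forward through $\Phi$ and then invokes injectivity.
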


\begin{proof}
Fix an arbitrary $\xz\in\X$.
By Lemma~\ref{lem:intersection of maximum sets}, we have
$\bigcap_{f\in\PX{\xz}}\MY{\Phi(f)}\neq\emptyset$.
Choose
\[
\yz\in\bigcap_{f\in\PX{\xz}}\MY{\Phi(f)}.
\]
We first prove that
\begin{equation}\label{lem3.9.1}
\Phi(\PX{\xz})\subset\PY{\yz}.
\end{equation}
Fix $u\in\Phi(\PX{\xz})$.
There exists $f_0\in\PX{\xz}$ such that
$u=\Phi(f_0)$.
By the choice of $\yz$,
we have $\yz\in\MY{\Phi(f_0)}$.
Then $\Phi(f_0)(\yz)=1$,
which yields $u=\Phi(f_0)\in\PY{\yz}$.
Since $u\in\Phi(\PX{\xz})$ was arbitrary,
we conclude that $\Phi(\PX{\xz})\subset\PY{\yz}$,
which proves \eqref{lem3.9.1}.

Applying the same argument to the map $\Phii$ and the point $\yz$,
we obtain
$\Phii(\PY{\yz})\subset\PX{\xo}$
for some $\xo\in\X$.
Applying $\Phi$ to both sides, we obtain
\begin{equation*}
\PY{\yz}\subset\Phi(\PX{\xo}).
\end{equation*}
Combining \eqref{lem3.9.1} with the last inclusion, we have
\[
\Phi(\PX{\xz})
\subset
\PY{\yz}
\subset
\Phi(\PX{\xo}).
\]
Since $\Phi$ is injective, it follows that
$\PX{\xz}\subset\PX{\xo}$.
By Lemma~\ref{lem:peak sets}, we obtain
$\xz=\xo$.
Hence the preceding inclusions imply
$\Phi(\PX{\xz})=\PY{\yz}$.

Finally, we prove the uniqueness of $\yz$.
Suppose that another point $\yo\in\Y$ satisfies
$\Phi(\PX{\xz})=\PY{\yo}$.
Then
$\PY{\yz}=\Phi(\PX{\xz})=\PY{\yo}$.
Applying Lemma~\ref{lem:peak sets}, we obtain
$\yz=\yo$.
Therefore such a point $\yz\in\Y$ is unique.
\end{proof}

We now use the point correspondence obtained above to derive the
representation of $\Phi$ and complete the proof of
Theorem~\ref{Thm:auxiliary representation theorem}.

\begin{proof}[\textbf{Proof of Theorem~\ref{Thm:auxiliary representation theorem}}]
Let $x\in\X$.
By Lemma~\ref{lem3.9}, there exists a unique $y\in\Y$
such that $\Phi(\PX{x})=\PY{y}$.
Set $\varphi(x)=y$.
Then $\varphi\colon\X\to\Y$ is a map
that satisfies
\begin{equation}\label{eq:phi}
\Phi(\PX{x})=\PY{\varphi(x)}
\qquad (x\in X).
\end{equation}
Similarly, applying Lemma~\ref{lem3.9} to $\Phi^{-1}$, we obtain a map
$\tau\colon Y\to X$
such that
\begin{equation}\label{eq:tau}
\Phi^{-1}(\PY{y})=\PX{\tau(y)}
\qquad (y\in Y).
\end{equation}

We prove that $\varphi$ and $\tau$ are
both bijective with $\tau=\varphi^{-1}$.
For each $x\in X$,
it follows from \eqref{eq:phi}
and \eqref{eq:tau} that
\[
\PX{x}
=
\Phi^{-1}(\PY{\varphi(x)})
=
\PX{\tau(\varphi(x))}.
\]
Hence, by Lemma~\ref{lem:peak sets},
$x=\tau(\varphi(x))$ for every $x\in X$.
Similarly,
\eqref{eq:phi} and \eqref{eq:tau} show that
\[
\PY{y}
=
\Phi(\PX{\tau(y)})
=
\PY{\varphi(\tau(y))}
\]
for every $y\in Y$.
Applying Lemma~\ref{lem:peak sets} once again, we obtain
$y=\varphi(\tau(y))$ for every $y\in Y$.
Consequently, $\varphi$ and $\tau$ are bijections satisfying
$\tau=\varphi^{-1}$.

We next prove
\begin{equation}\label{eq:mainformula}
\Phi(f)(y)=f(\tau(y))
\qquad (f\in\SCX,\ y\in Y).
\end{equation}
Fix $f\in\SCX$, $y\in Y$, and $\epsilon>0$.
By Lemma~\ref{lem:pointwise norm estimate}, there exists $h\in \SCX$ such that 
$h(\tau(y))=1$ and
\[
\VX{f+h}<f(\tau(y))+\epsilon+1.
\]
Since $h(\tau(y))=1$, we have
$h\in P_X(\tau(y))$.
Since $\tau=\varphi^{-1}$, we have
$\varphi(\tau(y))=y$.
Hence, by \eqref{eq:phi},
$\Phi(h)\in P_Y(y)$.
Thus
$\Phi(h)(y)=1$.
Therefore we have
\[
\Phi(f)(y)+1
=
\Phi(f)(y)+\Phi(h)(y)
\le
\VY{\Phi(f)+\Phi(h)}.
\]
Since $\Phi$ is norm-additive,
we obtain
\[
\V{\Phi(f)+\Phi(h)}_Y=\V{f+h}_X
<f(\tau(y))+\epsilon+1.
\]
Combining this with the previous inequality, we obtain
$\Phi(f)(y)+1
<f(\tau(y))+\epsilon+1$.
Hence
$\Phi(f)(y)
<f(\tau(y))+\epsilon$.
Since $\epsilon>0$ was arbitrary, we conclude that
$\Phi(f)(y)\le f(\tau(y))$.
Applying the same argument to
$\Phi^{-1}$,
we obtain
$\Phi^{-1}(u)(x)\leq u(\varphi(x))$
for $u\in\SCY$ and $x\in X$.
Substituting $u=\Phi(f)$ and $x=\tau(y)$
into the last inequality gives
$f(\tau(y))\leq\Phi(f)(y)$,
where we have used that $\tau=\varphi^{-1}$.
Hence \eqref{eq:mainformula} holds.

It remains to prove that $\tau$ is continuous.
Let $O\subset X$ be open, and fix $y_0\in\tau^{-1}(O)$.
Since $\tau(y_0)\in O$ and $O$ is open, by Urysohn's lemma there exists
$f_0\in\SCX$ such that
\[
f_0(\tau(y_0))=1
\qquad\text{and}\qquad
f_0=0 \ \text{on }X\setminus O.
\]
Define
\[
V:=\left\{y\in Y:\Phi(f_0)(y)>\frac{1}{2}\right\}.
\]
Since $\Phi(f_0)$ is continuous, $V$ is open.
Moreover,
$\Phi(f_0)(y_0)=f_0(\tau(y_0))=1$
by \eqref{eq:mainformula}, so
$y_0\in V$.
We claim that
$V\subset\tau^{-1}(O)$.
Indeed, fix $y\in V$ arbitrarily.
Then
$\Phi(f_0)(y)>1/2$.
By \eqref{eq:mainformula}, we have
$f_0(\tau(y))=\Phi(f_0)(y)>1/2$.
Since $f_0=0$ on $X\setminus O$, we obtain
$\tau(y)\in O$.
Hence
$y\in\tau^{-1}(O)$.
Since $y\in V$ was arbitrary, we conclude that
$V\subset\tau^{-1}(O)$.
Thus $V$ is an open neighborhood of $y_0$
contained in $\tau^{-1}(O)$.
Since $y_0\in\tau^{-1}(O)$ was arbitrary,
we conclude that $\tau^{-1}(O)$ is open.
Hence $\tau$ is continuous.

Since $\tau$ is a continuous bijection from the compact space $Y$
onto the Hausdorff space $X$, it follows that
$\tau$ is a homeomorphism.
This completes the proof of Theorem~\ref{Thm:auxiliary representation theorem}.
\end{proof}

\subsection{The positive unit ball case}

We now establish the main result of this subsection, namely,
the corresponding representation theorem for the positive unit ball.
Set
\[
\BK:=\set{f\in C(K):f\geq0,\ \VK{f}\leq1}.
\]

\begin{thm}\label{Thm:auxiliary representation theorem ball}
Let $X$ and $Y$ be compact Hausdorff spaces.
Let $\Psi\colon\BX\to\BY$ be a surjective norm-additive map.
Then there exists a homeomorphism $\tau\colon Y\to X$ such that
$\Psi(f)(y)=f(\tau(y))$
for all $f\in\BX$ and $y\in Y$.
\end{thm}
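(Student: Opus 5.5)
Our plan is to reduce to Theorem~\ref{Thm:auxiliary representation theorem} by a normalization argument: we show that $\Psi$ preserves norms, maps $\SCX$ onto $\SCY$, and is compatible with scalar multiples.

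\emph{Step 1: norm preservation.} Taking $g=f$ in the norm-additivity identity gives $\VY{2\Psi(f)}=\VX{2f}$, so $\VY{\Psi(f)}=\VX{f}$ for every $f\in\BX$. In particular, $\Psi(0)=0$, and $\Psi$ maps $\SCX$ into $\SCY$. Applying the same identity to a preimage shows that every $u\in\SCY$ has its preimage in $\SCX$. Hence $\Phi:=\Psi|_{\SCX}\colon\SCX\to\SCY$ is a surjective norm-additive map. Theorem~\ref{Thm:auxiliary representation theorem} then gives a homeomorphism $\tau\colon Y\to X$ with $\Psi(f)=f\circ\tau$ for all $f\in\SCX$.

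\emph{Step 2: extension to all of $\BX$.} Fix $f\in\BX$ with $f\neq0$, and let $y\in Y$. We recover $\Psi(f)(y)$ by testing against spheres, adapting the proof of \eqref{eq:mainformula}. For $h\in\SCX$ we have
\[
\VY{\Psi(f)+h\circ\tau}=\VX{f+h}.
\]
For the upper bound, apply Lemma~\ref{lem:pointwise norm estimate} to $f$; its proof only uses $0\le f\le 1$, so it holds verbatim for $f\in\BX$. This yields $h\in\SCX$ with $h(\tau(y))=1$ and $\VX{f+h}<f(\tau(y))+\epsilon+1$. Then
\[
\Psi(f)(y)+1\le\VY{\Psi(f)+h\circ\tau}<f(\tau(y))+\epsilon+1,
\]
so $\Psi(f)(y)\le f(\tau(y))$.

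\emph{Step 3: the reverse inequality (main obstacle).} We need to run the argument for $\Psi^{-1}$, which requires injectivity of $\Psi$ on $\BX$. The proof of Lemma~\ref{lem:injective} carries over verbatim: its test function $h$ lies in $\SCX$, and Lemma~\ref{lem:pointwise norm estimate} applies to $f,g\in\BX$. So $\Psi$ is bijective and $\Psi^{-1}$ is norm-additive. By Step~1 applied to $\Psi^{-1}$, its restriction to $\SCY$ is $u\mapsto u\circ\tau^{-1}$; this follows because it is the inverse of $f\mapsto f\circ\tau$. Repeating Step~2 for $\Psi^{-1}$ gives $\Psi^{-1}(u)(x)\le u(\tau^{-1}(x))$ for $u\in\BY$. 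Substituting $u=\Psi(f)$ and $x=\tau(y)$ yields $f(\tau(y))\le\Psi(f)(y)$.

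Combining Steps~2 and~3 gives $\Psi(f)=f\circ\tau$ on all of $\BX$; the case $f=0$ was settled in Step~1. The delicate points are checking that the pointwise estimate and the injectivity lemma remain valid for functions of norm less than $1$, and that the restriction of $\Psi^{-1}$ to the sphere is indeed $(\Psi|_{\SCX})^{-1}$. Both follow from norm preservation.
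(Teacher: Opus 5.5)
Your proof is correct, but it follows a different route from the paper. The paper does not extend from the sphere level. Instead, for every $0<r\le 1$ it introduces the rescaled map $\Psi_r(f)=\frac1r\Psi(rf)$ on $S_X^+$ and checks that each $\Psi_r$ is a surjective norm-additive map onto $S_Y^+$. Applying Theorem~\ref{Thm:auxiliary representation theorem} to each $\Psi_r$ gives homeomorphisms $\tau_r$ with $\Psi(f)=f\circ\tau_r$ whenever $\|f\|_X=r$. It then proves by contradiction that $\tau_r=\tau_1$: if $\tau_r(y_0)\neq\tau_1(y_0)$, take $f\in S_X^+$ with $f(\tau_1(y_0))=1$ and $f(\tau_r(y_0))=0$, and set $g=r(\mathbf 1-f)$. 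Then $\|\Psi(f)+\Psi(g)\|_Y\ge 1+r$ while $\|f+g\|_X\le 1$. You instead invoke the sphere theorem once, at level $r=1$, and handle every other norm level by rerunning the pointwise-estimate argument from the end of its proof. This requires you to reopen Lemmas~\ref{lem:pointwise norm estimate} and~\ref{lem:injective} and note that they use only $0\le f\le 1$, which they do. So the paper treats the sphere theorem as a black box, at the cost of a one-parameter family of homeomorphisms and a consistency step. Your route avoids the family and the consistency step but depends on the internals of the sphere proof. Also, the injectivity detour in your Step~3 is unnecessary. Apply Lemma~\ref{lem:pointwise norm estimate} in $Y$ to $\Psi(f)\in B_Y^+$ at $y$ to get $k\in S_Y^+$ with $k(y)=1$ and $\|\Psi(f)+k\|_Y<\Psi(f)(y)+\epsilon+1$. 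Since $k=\Psi(k\circ\tau^{-1})$, norm-additivity gives $f(\tau(y))+1\le\|f+k\circ\tau^{-1}\|_X<\Psi(f)(y)+\epsilon+1$, which is the reverse inequality directly.
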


Throughout the remainder of this subsection, let $X$ and $Y$ be
compact Hausdorff spaces, and let
$\Psi\colon\BX\to\BY$ be a surjective norm-additive map.
That is,
\[
\VY{\Psi(f)+\Psi(g)}=\VX{f+g}
\qquad(f,g\in\BX).
\]

We first observe that $\Psi$ preserves norms.

\begin{lem}\label{lem:ball-norm-preserving}
For every $f\in\BX$, we have
$\VY{\Psi(f)}=\VX{f}$. 
\end{lem}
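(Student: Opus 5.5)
The plan is to take $g=f$ in the norm-additivity identity. For every $f\in\BX$ we get
\[
2\VY{\Psi(f)}=\VY{\Psi(f)+\Psi(f)}=\VX{f+f}=2\VX{f},
\]
and dividing by $2$ gives $\VY{\Psi(f)}=\VX{f}$. Both sides are honest norms, so $\V{2u}=2\V{u}$ holds in $C(X)$ and in $C(Y)$ with no further input.

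Thus the lemma follows at once from norm-additivity applied on the diagonal $g=f$ together with positive homogeneity of the supremum norm. No property of the positive unit ball is used beyond the fact that $\Psi$ maps $\BX$ into $\BY$ and that both sets sit inside normed spaces. Surjectivity of $\Psi$ is not needed for this step.

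There is no genuine obstacle. The only thing to check is that the pair $(f,f)$ is admissible in the norm-additivity identity, and it is, since that identity is assumed for all $f,g\in\BX$, including $g=f$.
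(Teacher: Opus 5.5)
Your proof is correct and is the same as the paper's: take $g=f$ in the norm-additivity identity and use $\|2u\|=2\|u\|$ on both sides. Your remark that surjectivity is not needed for this step is also accurate.
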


\begin{proof}
Let $f\in\BX$. Since $\Psi$ is norm-additive,
$\VY{\Psi(f)+\Psi(f)}=\VX{f+f}$.
Hence
$\VY{\Psi(f)}=\VX{f}$.
\end{proof}

We now reduce the problem on the positive unit ball to the
positive unit sphere by a normalization argument.
For each $0<r\leq1$, define
$\Psir\colon\SCX\to\SCY$
by
\[
\Psir(f):=\frac{1}{r}\Psi(rf)
\qquad (f\in\SCX).
\]
By Lemma~\ref{lem:ball-norm-preserving},
$\|\Psi(rf)\|_Y=r$.
Since $\Psi(rf)\geq0$, it follows that
$\Psi(rf)/r\in S_Y^+$.
Thus $\Psi_r$ is well defined.
We next show that each $\Psir$ satisfies the hypotheses of
Theorem~\ref{Thm:auxiliary representation theorem}.

\begin{lem}\label{Psi is a surjective norm-additive}
For every $0<r\leq1$, the map
$\Psir\colon\SCX\to\SCY$
is a surjective norm-additive map.
\end{lem}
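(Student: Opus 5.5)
Norm-additivity of $\Psir$ is immediate. For $f,g\in\SCX$, we have $rf,rg\in\BX$. Using positive homogeneity of the norm together with the norm-additivity of $\Psi$,
\[
\VY{\Psir(f)+\Psir(g)}=\frac1r\VY{\Psi(rf)+\Psi(rg)}=\frac1r\VX{rf+rg}=\VX{f+g}.
\]
Well-definedness was already noted before the statement, via Lemma~\ref{lem:ball-norm-preserving}.

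The real content is surjectivity. Fix $u\in\SCY$. Then $ru\in\BY$, so surjectivity of $\Psi$ gives some $k\in\BX$ with $\Psi(k)=ru$. By Lemma~\ref{lem:ball-norm-preserving}, $\VX{k}=\VY{ru}=r>0$, and $k\ge0$. Hence $f:=k/r$ lies in $\SCX$ and satisfies $rf=k$. It follows that
\[
\Psir(f)=\frac1r\Psi(k)=u.
\]
So the only point that needs care is that the preimage of $ru$ has norm exactly $r$. This is what lets us write it as $r$ times an element of the positive unit sphere, and Lemma~\ref{lem:ball-norm-preserving} supplies exactly this. No step poses a serious obstacle, and the proof is complete once these two computations are written out.
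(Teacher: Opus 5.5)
Your proof is correct and follows essentially the same route as the paper. You get norm-additivity from positive homogeneity of the norm, and surjectivity by pulling $ru$ back under $\Psi$ and using Lemma~\ref{lem:ball-norm-preserving} to see that the preimage has norm exactly $r$, so that it rescales into $\SCX$. The only difference is that you check the two properties in the opposite order.
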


\begin{proof}
First, we show that $\Psir$ is surjective.
Let $u\in\SCY$.
Since $0<r\leq1$, we have 
$\V{ru}_Y=r$, and thus $ru\in\BY$.
Since $\Psi\colon\BX\to\BY$ is surjective, there exists $f_0\in\BX$ such that
$\Psi(f_0)=ru$.
By Lemma~\ref{lem:ball-norm-preserving},
we obtain
$\VX{f_0}=\VY{\Psi(f_0)}=r$.
Let $g_0=(1/r)f_0$.
Then $g_0\in\SCX$.
Since $rg_0=f_0$ and $\Psi(f_0)=ru$, we obtain
$\Psir(g_0)
=\Psi(rg_0)/r=\Psi(f_0)/r
=u$.
Therefore $\Psir$ is surjective.

Next, we show that $\Psir$ is norm-additive.
Let $f,g\in \SCX$ be arbitrary.
Since $\Psi$ is norm-additive,
\[
\V{\Psir(f)+\Psir(g)}_Y
=\frac{1}{r}\V{\Psi(rf)+\Psi(rg)}_Y
=\frac{1}{r}\V{rf+rg}_X
=\V{f+g}_X.
\]
Therefore $\Psir$ is norm-additive.
\end{proof}

By the preceding lemma, for each $0<r\leq1$, the map
$\Psir\colon\SCX\to\SCY$
is a surjective norm-additive map.
Hence, by Theorem~\ref{Thm:auxiliary representation theorem},
there exists a homeomorphism
$\tau_r\colon Y\to X$
such that
\[
\Psir(f)=f\circ\tau_r
\qquad (f\in\SCX).
\]

We next translate this representation back to $\Psi$.
Fix $0<r\leq1$, and let $f\in\BX$ satisfy $\VX{f}=r$.
Then $(1/r)f\in\SCX$, and hence, by
the preceding equality,
\[
\Psir\left(\frac{1}{r}f\right)
=
\left(\frac{1}{r}f\right)\circ\tau_r.
\]
By the definition of $\Psir$,
\[
\Psir\left(\frac{1}{r}\,f\right)
=
\frac{1}{r}\Psi(f).
\]
Combining the above equalities, we obtain
\begin{equation}\label{eq:ball-representation-r}
\Psi(f)=f\circ\tau_r
\qquad
(f\in\BX,\ \VX{f}=r).
\end{equation}
Since $0<r\leq1$ was arbitrary,
\eqref{eq:ball-representation-r} holds for every $0<r\leq1$.

Hereafter, $\mathbf{0}$ and $\mathbf{1}$ denote the constant functions with values $0$ and $1$, respectively, on the underlying space whenever the domain is clear.
We now use the representation obtained above for functions of fixed norm to
show that the homeomorphism $\tau_r$ is independent of $r$ and complete the
proof of Theorem~\ref{Thm:auxiliary representation theorem ball}.

\begin{proof}[\textbf{Proof of Theorem~\ref{Thm:auxiliary representation theorem ball}.}]
Let $0<r\leq1$.
First we prove that $\tau_r=\tau_1$,
where $\tau_r$ is the homeomorphism from
$\Y$ onto $\X$ as in \eqref{eq:ball-representation-r}.
The assertion is trivial for $r=1$.
Fix $0<r<1$.
Suppose, to the contrary, that $\tau_r\neq\tau_1$.
Then there exists $y_0\in Y$ such that
$\tau_r(y_0)\neq\tau_1(y_0)$.
By Urysohn's lemma, there exists $f\in \SCX$ such that
$f(\tau_1(y_0))=1$ and
$f(\tau_r(y_0))=0$.
Applying \eqref{eq:ball-representation-r},
\[
\Psi(f)(y_0)
=
f(\tau_1(y_0))
=
1.
\]
Put $g:=r(\mathbf{1}-f)$.
Since $r>0$ and $f\in \SCX$, we have $0\le g\le r$. 
Since $f(\tau_r(y_0))=0$, we obtain $g(\tau_r(y_0))=r$.
Therefore $g\in \BX$ and $\VX{g}=r$.
By \eqref{eq:ball-representation-r}, we have
\[
\Psi(g)(y_0)
=
g(\tau_r(y_0))
=
r.
\]
Combining this with the preceding identity gives
\[
1+r
=
\Psi(f)(y_0)+\Psi(g)(y_0)
\leq
\VY{\Psi(f)+\Psi(g)}.
\]
Since $\Psi$ is norm-additive,
we conclude
$1+r\leq\V{f+g}_X$.
On the other hand, since $g=r(\mathbf{1}-f)$,
we have $\V{f+g}_X=\V{(1-r)f+r\mathbf{1}}_X\leq1$.
Combining this with the last inequality gives
$1+r\leq\V{f+g}_X\leq1$, which yields
$r\leq0$, a contradiction.
We conclude that $\tau_r=\tau_1$.

Set $\tau=\tau_1$.
Then $\tau\colon\Y\to\X$ is a homeomorphism
with $\tau=\tau_r$ for every $0<r\leq1$
by the result proved in the preceding paragraph.
We prove that $\Psi(f)=f\circ\tau$ for every
$f\in\BX$.
Let $f\in\BX\setminus\{0\}$.
Put $r:=\V{f}_X$, and then
$0<r\leq1$.
Applying \eqref{eq:ball-representation-r}
with $\tau_r=\tau$,
we have $\Psi(f)=f\circ\tau$.
Now we consider the case when $f=\mathbf{0}$.
Lemma~\ref{lem:ball-norm-preserving} shows that
$
\V{\Psi(\mathbf{0})}_Y
=
\V{\mathbf{0}}_X
=
0.
$
Hence $\Psi(\mathbf{0})=\mathbf{0}$, and therefore
$\Psi(\mathbf{0})=\mathbf{0}\circ\tau$.
Thus
$\Psi(f)=f\circ\tau$
for every $f\in\BX$.
\end{proof}

\section{The case $1<p<\infty$}\label{section3}

We now prove the forward implication of
Theorem~\ref{thm1.1} for $1<p<\infty$.
Throughout this section, let $1<p<\infty$, and set
\begin{align*}
\Sp:=\{f\in C^1(I):f(0)\geq0,\ f'\geq0,\ \Vp{f}=1\},\\
\Vp{f}:=\bigl(|f(0)|^p+\VI{f'}^p\bigr)^{1/p}
\qquad(f\in C^1(I)).
\end{align*}
Let $T\colon\Sp\to\Sp$ be a surjective norm-additive map; that is,
\[
\Vp{T(f)+T(g)}=\Vp{f+g}
\qquad (f,g\in\Sp).
\]
Our strategy is to transfer $T$ to a surjective norm-additive map on
\[
\B=\{u\in C(I):u\geq0,\ \VI{u}\leq1\}
\]
via the derivative parametrization and then apply
Theorem~\ref{Thm:auxiliary representation theorem ball}.
To carry out this reduction, we define a map
$D_p\colon\Sp\to\B$ by
\[
D_p(f)=f'
\qquad(f\in\Sp).
\]

\subsection{The derivative parametrization}

The following lemma is the key ingredient for this reduction, since it shows
that $D_p$ is a bijection between $\mathcal{S}_{p}^{+}$ and $\B$.

\begin{lem}\label{lem:[p] derivative map}
The map $D_p$ is bijective with its inverse
\[
D_p^{-1}(u)(t)
=
\bigl(1-\|u\|_{I}^p\bigr)^{1/p}
+
\int_0^t u(s)\,ds
\]
for all $u\in\B$ and $t\in I$.
\end{lem}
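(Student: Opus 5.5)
We will verify directly that $D_p$ maps $\Sp$ into $\B$, and that the map $E_p\colon\B\to\Sp$ defined by
\[
E_p(u)(t):=\bigl(1-\VI{u}^p\bigr)^{1/p}+\int_0^t u(s)\,ds
\qquad(u\in\B,\ t\in I)
\]
is well defined and is a two-sided inverse of $D_p$.

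\emph{Step 1: $D_p$ is well defined.} For $f\in\Sp$ we have $f'\in C(I)$ and $f'\ge0$. The condition $f(0)^p+\VI{f'}^p=1$ together with $f(0)\ge0$ forces $\VI{f'}\le1$. Hence $f'\in\B$.

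\emph{Step 2: $E_p$ is well defined.} Let $u\in\B$. Since $0\le\VI{u}\le1$, the number $c:=(1-\VI{u}^p)^{1/p}$ is real and nonnegative. By the fundamental theorem of calculus, $E_p(u)\in C^1(I)$ with $E_p(u)'=u\ge0$ and $E_p(u)(0)=c\ge0$. Moreover,
\[
\Vp{E_p(u)}^p=c^p+\VI{u}^p=1,
\]
so $E_p(u)\in\Sp$.

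\emph{Step 3: $D_p\circ E_p=\mathrm{id}_{\B}$.} This is immediate from $E_p(u)'=u$.

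\emph{Step 4: $E_p\circ D_p=\mathrm{id}_{\Sp}$.} Let $f\in\Sp$. Since $f(0)\ge0$ and $f(0)^p=1-\VI{f'}^p$, we get $f(0)=(1-\VI{f'}^p)^{1/p}$. The fundamental theorem of calculus then gives
\[
f(t)=f(0)+\int_0^t f'(s)\,ds=E_p(f')(t)\qquad(t\in I).
\]

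Steps 3 and 4 together show that $D_p$ is bijective with $D_p^{-1}=E_p$, which is exactly the stated formula.

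The only point requiring care is in Step 4: recovering $f(0)$ uniquely from the norm condition. This works because $f(0)\ge0$ and $1<p<\infty$, so the $p$-th root is single-valued on $[0,\infty)$. Notice that this recovery fails for $p=\infty$, where $f(0)$ is not determined by $f'$. This is why the case $p=\infty$ needs a different treatment later. No other step presents an obstacle.
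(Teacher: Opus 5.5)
Your proposal is correct and follows the paper's proof essentially step for step: define the candidate inverse, check that it lands in $\Sp$, and verify both compositions are identities. Your Step~1, checking $\VI{f'}\le 1$ so that $D_p$ really maps into $\B$, is a small point the paper leaves implicit. One minor quibble: the recovery of $f(0)$ in Step~4 uses only $p<\infty$ and not $p>1$, so the lemma also holds for $p=1$.
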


\begin{proof}
Define
\[
\Jp(u)(t)
:=
\bigl(1-\|u\|_I^p\bigr)^{1/p}
+
\int_0^t u(s)\,ds
\qquad(u\in\B,\ t\in I).
\]
Fix an arbitrary $u\in \B$.
We prove that $\Jp(u)\in\Sp$.
By definition, $\Jp(u)\in C^1(I)$.
Since $u\in \B$,
we have $u\geq0$, and $\|u\|_I\le1$.
Hence
\[
\Jp(u)(0)
=
\bigl(1-\|u\|_I^p\bigr)^{1/p}
\ge0,
\qquad
\Jp(u)'=u\ge0.
\]
Moreover,
\[
\|\Jp(u)\|_{(p)}^p
=
\Jp(u)(0)^p+\|\Jp(u)'\|_I^p
=
1-\|u\|_I^p+\|u\|_I^p
=
1.
\]
These show that $\Jp(u)\in\Sp$.
Therefore,
$\Jp\colon\B\to\Sp$
is well defined.

We prove that $D_p$ is bijective
and $\Jp$ is its inverse. Since
$D_p(J_p(u))
=\Jp(u)'
=u$ for all $u\in\B$,
we obtain
$D_p\circ\Jp=\operatorname{id}_{\B}$.
Next, we show that $\Jp\circ D_p=\operatorname{id}_{\Sp}$.
Fix an arbitrary $f\in \Sp$.
Since $D_p(f)=f'$, we have
\[
\Jp(D_p(f))(t)
=
\bigl(1-\|f'\|_I^p\bigr)^{1/p}
+\int_0^t f'(s)\,ds\qquad (t\in I).
\]
Since $f\in \Sp$,
we have
$f(0)^p+\VI{f'}^p=1$.
Hence the above equalities and the Fundamental Theorem of Calculus yield
\[
\Jp(D_p(f))(t)
=
f(0)+\int_0^t f'(s)\,ds
=
f(t)
\qquad (t\in I).
\]
Therefore, 
$\Jp\circ D_p=\operatorname{id}_{\Sp}$.
It follows that $D_p$ is bijective and its inverse is given by
$D_p^{-1}=\Jp$.
\end{proof}

\subsection{Preservation of the initial value}

To obtain a norm-additive map on $\B$ via the
derivative parametrization, we first establish that $T$ preserves
the initial value.

Define
\begin{equation}\label{eq:alpha}
\alpha(t):=(1+t)^p+1-t^p \qquad(t\in I).
\end{equation}
Since $p>1$, we have
\[
\alpha'(t)=p(1+t)^{p-1}-pt^{p-1}>0
\]
for all $t\in I$. 
Then $\alpha$ is strictly increasing, and hence injective.

Let $f\in\Sp$.
Then $f(0)^p+\VI{f'}^p=\Vp{f}^p=1$.
Hence
\[
\VI{f'}^p=1-f(0)^p.
\]
Combining this with \eqref{eq:alpha} gives
\[
\Vp{\mathbf{1}+f}^p
=
(1+f(0))^p+\|f'\|_I^p
=(1+f(0))^p+1-f(0)^p
=\alpha(f(0)).
\]
Thus we obtain
\begin{equation}\label{lem2.2.1}
\alpha(f(0))=\Vp{\mathbf{1}+f}^p
\qquad(f\in\Sp).
\end{equation}

For each $f\in\Sp$, we define the \emph{star set} of $f$ in $\Sp$ by
\[
\G(f):=\{g\in\Sp:\Vp{f+g}=2\}.
\]
As a first step, we seek a characterization of $\mathbf{1}$
that is preserved by $T$.

\begin{lem}\label{lem:G(1)={1}}
$\G(\mathbf 1)=\{\mathbf 1\}$.
\end{lem}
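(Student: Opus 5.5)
We must show that $g\in\Sp$ with $\Vp{\mathbf 1+g}=2$ forces $g=\mathbf 1$. The inclusion $\mathbf 1\in\G(\mathbf 1)$ is immediate: $\mathbf 1(0)=1$ and $\mathbf 1'=0$, so $\mathbf 1\in\Sp$, and $\Vp{\mathbf 1+\mathbf 1}=\Vp{2\cdot\mathbf 1}=2$.

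For the reverse inclusion, fix $g\in\G(\mathbf 1)$. By \eqref{lem2.2.1}, the condition $\Vp{\mathbf 1+g}=2$ becomes
\[
\alpha(g(0))=\Vp{\mathbf 1+g}^p=2^p .
\]
We also compute $\alpha(1)=2^p+1-1=2^p$. Since $\alpha$ is strictly increasing on $I$ (this is where $p>1$ enters) and $g(0)\in I$, injectivity of $\alpha$ gives $g(0)=1$.

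Then $\VI{g'}^p=1-g(0)^p=0$, so $g'\equiv0$. Hence $g$ is constant, equal to $g(0)=1$, and $g=\mathbf 1$. This proves $\G(\mathbf 1)=\{\mathbf 1\}$.

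The only point needing care is the injectivity of $\alpha$, which is already established in the text via $\alpha'>0$ for $p>1$. For $p=1$, $\alpha$ is constant (equal to $2$), and the argument breaks down, consistent with the counterexample announced for $p=1$.
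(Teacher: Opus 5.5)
Your proof is correct and follows the paper's argument essentially step for step: you use \eqref{lem2.2.1} together with $\alpha(1)=2^p$ and the injectivity of $\alpha$ to get $g(0)=1$, then $\VI{g'}=0$ to conclude $g=\mathbf 1$, and you check $\mathbf 1\in\G(\mathbf 1)$ directly. Your remark that $\alpha\equiv 2$ when $p=1$ correctly identifies why this step breaks down at the endpoint.
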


\begin{proof}
First, we show that $\G(\mathbf1)\subset\{\mathbf1\}$.
Fix an arbitrary $f\in\G(\mathbf1)$.
Then,
by definition,
\[
\Vp{\mathbf1+f}=2.
\]
Let $\alpha$ be the function
defined as in \eqref{eq:alpha}.
We have
$\alpha(1)=2^p$.
Combining these equalities
with \eqref{lem2.2.1},
\[
\alpha(f(0))=\Vp{\mathbf{1}+f}^p
=2^p=\alpha(1).
\]
Since $\alpha$ is injective,
we conclude that $f(0)=1$.
By the definition of $\Vp{\cdot}$, we have
\[
\VI{f'}^p=1-f(0)^p=0.
\]
Hence $\VI{f'}=0$, and therefore 
$f'=\mathbf{0}$.
Since $f(0)=1$, 
we conclude that
$f=\mathbf1$.
Since $f\in\G(\mathbf{1})$ was arbitrary,
$\G(\mathbf1)\subset\{\mathbf1\}$.

Conversely, by the definition
of $\Vp{\cdot}$, we have
$\Vp{\mathbf1+\mathbf1}=2$.
Thus $\mathbf1\in\G(\mathbf1)$,
and hence $\{\mathbf{1}\}\subset\G(\mathbf{1})$.
Therefore $\G(\mathbf1)=\{\mathbf1\}$.
\end{proof}

To characterize $\mathbf{1}$ solely in terms of its star set,
we now establish the converse implication.

\begin{lem}\label{lem:singleton-star-set}
For each $f\in\Sp\setminus\{\mathbf{1}\}$
there exists $h\in\Sp$ such that $h\in\G(f)$
and $h\neq f$.
\end{lem}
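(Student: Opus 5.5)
Given $f\in\Sp\setminus\{\mathbf 1\}$, the plan is to build $h\in\Sp$ with $h\neq f$ and $\Vp{f+h}=2$. We use the equality case of Minkowski's inequality in $\mathbb{R}^2$ with the $\ell^p$ norm, $1<p<\infty$. For $h\in\Sp$,
\[
\Vp{f+h}^p=(f(0)+h(0))^p+\VI{f'+h'}^p .
\]
We choose $h$ with $h(0)=f(0)$ and $\VI{h'}=\VI{f'}$, and such that $f'$ and $h'$ attain their maximum at a common point $t_0\in I$. Then $\VI{f'+h'}=2\VI{f'}$, and
\[
\Vp{f+h}^p=2^p\bigl(f(0)^p+\VI{f'}^p\bigr)=2^p,
\]
so $\Vp{f+h}=2$. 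The only remaining task is to ensure $h\neq f$, which requires freedom in choosing $h'$.

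Since $f\neq\mathbf 1$, we have $f(0)<1$, hence $r:=\VI{f'}=(1-f(0)^p)^{1/p}>0$ (if $f(0)=1$ then $f'=0$ and $f=\mathbf 1$). Pick $t_0\in I$ with $f'(t_0)=r$.

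Define $h'=u$ as follows. If $f'$ is not the constant $r$, take $u\equiv r$. If $f'\equiv r$, take a nonconstant continuous $u$ with $0\le u\le r$ and $u(t_0)=r$, for instance $u(t)=r(1-|t-t_0|/2)$. Then set $h(t):=f(0)+\int_0^t u(s)\,ds$.

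This gives $h(0)\ge 0$, $h'=u\ge0$ and $\VI{h'}=r$, so $\Vp{h}=1$ and $h\in\Sp$. Also $h'\neq f'$, hence $h\neq f$. Finally $f'(t_0)+u(t_0)=2r$, so the displayed computation gives $h\in\G(f)$.

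The main obstacle is guaranteeing $h\neq f$. This is handled by the case split on whether $f'$ is constant. No estimates are needed beyond attainment of the maximum of $f'$ on the compact interval $I$.
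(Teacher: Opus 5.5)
Your proof is correct and is essentially the paper's argument. Both keep $h(0)=f(0)$ and $\VI{h'}=\VI{f'}$ with $h'$ and $f'$ peaking at a common point $t_0$, so that $\Vp{f+h}^p=2^p\bigl(f(0)^p+\VI{f'}^p\bigr)=2^p$; the only difference is that, instead of your case split (constant $r$ versus a tent function), the paper takes $h'=uf'$ with $0\le u\le 1$, $u(t_0)=1$ and $u(t_1)=0$ at a point $t_1\neq t_0$ where $f'(t_1)>0$, which handles both cases at once.
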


\begin{proof}
Let $f\in\Sp\setminus\{\mathbf{1}\}$.
Since $f(0)^p+\V{f'}_{I}^p=\Vp{f}^p=1$,
we have $0\leq f(0)\leq 1$.
We prove that $f(0)<1$.
Suppose, to the contrary, that $f(0)=1$.
Then $\V{f'}_{I}=0$, and hence $f'=\mathbf{0}$. Therefore
$f=\mathbf{1}$, which contradicts our assumption.
Thus $f(0)<1$.
Hence $\VI{f'}>0$.
Choose \(t_0\in I\) such that \(f'(t_0)=\VI{f'}\).
By the continuity of \(f'\), there exists
\(t_1\in I\setminus\{t_0\}\) such that \(f'(t_1)>0\).
Choose $u\in C(I)$ satisfying
\[
0\leq u\leq1,\qquad
u(t_0)=1,\qquad
u(t_1)=0.
\]
Put $v:=uf'$, and then $v\in C(I)$
and $v\geq0$.
Since $u(t_0)=1=\VI{u}$ and $f'(t_0)=\VI{f'}$,
we have
\[
v(t_0)=\VI{f'}=\VI{v}.
\]
Define $h\in C^1(I)$ by
\[
h(t):=f(0)+\int_0^tv(s)\,ds
\qquad(t\in I).
\]
Then $h(0)=f(0)$ and $h'=v$.
Since $u(t_1)=0$, we obtain
$v(t_1)=0<f'(t_1)$.
This shows that $h'=v\neq f'$,
and hence $h\neq f$.
Moreover, by the defining property
of $\Vp{\cdot}$ with $\VI{v}=\VI{f'}$,
\[
\Vp{h}^p
=f(0)^p+\VI{v}^p
=f(0)^p+\VI{f'}^p
=\Vp{f}^p=1.
\]
Thus $h\in\Sp$ with $h\neq f$.

Finally we prove $h\in\G(f)$.
Since $v(t_0)=\VI{f'}=f'(t_0)$,
we have $\VI{f'+v}=2\VI{f'}$.
Since $\Vp{f}=1$, we obtain
\[
\Vp{f+h}^p
=(2f(0))^p+\VI{f'+v}^p
=2^p\bigl(f(0)^p+\VI{f'}^p\bigr)
=2^p.
\]
Therefore $\Vp{f+h}=2$,
which yields $h\in\G(f)$.
\end{proof}

We now use this characterization to determine the image of
$\mathbf{1}$ under $T$.

\begin{lem}\label{lem:T(1)=1}
$T(\mathbf{1})=\mathbf{1}$.
\end{lem}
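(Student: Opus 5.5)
The plan is to characterize $\mathbf{1}$ intrinsically as the unique element of $\Sp$ whose star set is a singleton, and then transport this characterization through $T$ using the star-set preservation argument of Lemma~\ref{lem:star-set-preservation}.

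First, I will establish the analogue of Lemma~\ref{lem:star-set-preservation} in the present setting, namely $T(\G(f))=\G(T(f))$ for every $f\in\Sp$. The proof there uses only surjectivity and norm-additivity, so it transfers verbatim. Given $g\in\G(f)$, we have $\Vp{T(f)+T(g)}=\Vp{f+g}=2$. Conversely, for $v\in\G(T(f))$, choose $h$ with $T(h)=v$; norm-additivity then gives $\Vp{f+h}=2$.

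Second, I will show that $T$ is injective on $\G(\mathbf{1})$ in the sense needed. Concretely, by Lemma~\ref{lem:G(1)={1}}, $\G(T(\mathbf{1}))=T(\G(\mathbf{1}))=\{T(\mathbf{1})\}$, so $\G(T(\mathbf{1}))$ is a singleton. No injectivity of $T$ is needed here, because the image of a singleton is a singleton.

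Third, I will argue by contradiction. Suppose $T(\mathbf{1})\neq\mathbf{1}$. By Lemma~\ref{lem:singleton-star-set} applied to $f=T(\mathbf{1})$, there is $h\in\G(T(\mathbf{1}))$ with $h\neq T(\mathbf{1})$. Then $\G(T(\mathbf{1}))$ contains two distinct elements, which contradicts the previous step. Hence $T(\mathbf{1})=\mathbf{1}$.

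I expect no real obstacle here. The only point to verify carefully is that the star-set preservation holds for $T$ on $\Sp$, which is immediate from surjectivity and norm-additivity.
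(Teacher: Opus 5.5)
Your proposal is correct and matches the paper's proof: star-set preservation, redone for $T$ on $\Sp$ from surjectivity and norm-additivity alone, gives $\G(T(\mathbf{1}))=T(\G(\mathbf{1}))=\{T(\mathbf{1})\}$, and Lemma~\ref{lem:singleton-star-set} then rules out $T(\mathbf{1})\neq\mathbf{1}$. The label ``injective on $\G(\mathbf{1})$'' in your second step is a misnomer, since, as you note yourself, no injectivity is used there.
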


\begin{proof}
First we prove $\G(T(\mathbf{1}))=\{T(\mathbf{1})\}$.
By Lemma~\ref{lem:G(1)={1}}, we have $\G(\mathbf{1})=\{\mathbf{1}\}$. Hence
$T(\G(\mathbf{1}))=\{T(\mathbf{1})\}$.
On the other hand, the same argument as in the proof of
Lemma~\ref{lem:star-set-preservation} can be applied to
$T\colon\Sp\to\Sp$,
in place of $\Phi\colon\SCX\to\SCY$ to conclude
$T(\G(\mathbf{1}))=\G(T(\mathbf{1}))$:
In fact, 
let $f\in T(\G(\mathbf{1}))$ be arbitrary.
Then there exists $g_1\in \G(\mathbf{1})$ such that $T(g_1)=f$.
By definition of $\G(\mathbf{1})$, we have $\Vp{\mathbf{1}+g_1}=2$.
Since $T$ is norm-additive,
\[
\Vp{T(\mathbf{1})+f}
=
\Vp{T(\mathbf{1})+T(g_1)}
=
\Vp{\mathbf{1}+g_1}=2.
\]
Thus $f\in\G(T(\mathbf{1}))$. 
Since $f\in T(\G(\mathbf{1}))$ was arbitrary, we conclude that $T(\G(\mathbf{1}))\subset\G(T(\mathbf{1}))$.
Conversely, let $f\in\G(T(\mathbf{1}))$. By definition, we have $\Vp{T(\mathbf{1})+f}=2$. 
Since $T\colon\Sp\to\Sp$ is surjective,
there exists $g_2\in\Sp$ such that $T(g_2)=f$.
Since $T$ is norm-additive,
we obtain
$\Vp{\mathbf{1}+g_2}
=
\Vp{T(\mathbf{1})+T(g_2)}
=
\Vp{T(\mathbf{1})+f}
=
2$,
which yields $g_2\in\G(\mathbf{1})$.
Thus $f=T(g_2)\in T(\G(\mathbf{1}))$. 
Since $f\in\G(T(\mathbf{1}))$ was arbitrary, we conclude that
$\G(T(\mathbf{1}))\subset T(\G(\mathbf{1}))$.
Therefore,
$\G(T(\mathbf{1}))
=T(\G(\mathbf{1}))=\{T(\mathbf{1})\}$.

We prove $T(\mathbf{1})=\mathbf{1}$.
Suppose, to the contrary, that $T(\mathbf{1})\neq\mathbf{1}$.
By Lemma~\ref{lem:singleton-star-set},
applied to $T(\mathbf{1})$, in place of $f$,
there exists $h\in\Sp$ such that
$h\in\G(T(\mathbf{1}))$ and $h\neq T(\mathbf{1})$.
This is a contradiction with
$\G(T(\mathbf{1}))=\{T(\mathbf{1})\}$.
Therefore
we conclude that $T(\mathbf{1})=\mathbf{1}$.
\end{proof}

Having identified $\mathbf{1}$ as a fixed point of $T$, we now
return to our goal of understanding the behavior of the initial
value under $T$.

\begin{lem}\label{lem:T(f)(0)=f(0)}
For every $f\in\Sp$, we have
$T(f)(0)=f(0)$.
\end{lem}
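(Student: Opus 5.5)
The plan is to combine Lemma~\ref{lem:T(1)=1} with identity \eqref{lem2.2.1} and the injectivity of $\alpha$. All the needed ingredients are already in place, so the argument should be a short chain of equalities.

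Fix $f\in\Sp$. By Lemma~\ref{lem:T(1)=1} we have $T(\mathbf{1})=\mathbf{1}$. Norm-additivity of $T$, applied to the pair $\mathbf{1},f\in\Sp$, then gives
\[
\Vp{\mathbf{1}+T(f)}=\Vp{T(\mathbf{1})+T(f)}=\Vp{\mathbf{1}+f}.
\]
Since $T(f)\in\Sp$, identity \eqref{lem2.2.1} applies to both $f$ and $T(f)$. Raising the last equality to the $p$-th power therefore yields
\[
\alpha(T(f)(0))=\Vp{\mathbf{1}+T(f)}^p=\Vp{\mathbf{1}+f}^p=\alpha(f(0)).
\]

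Both $f(0)$ and $T(f)(0)$ lie in $I$, because any $g\in\Sp$ satisfies $g(0)\ge0$ and $g(0)^p\le g(0)^p+\VI{g'}^p=1$. On $I$ the function $\alpha$ is strictly increasing, since $p>1$, and hence injective. It follows that $T(f)(0)=f(0)$.

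There is no real obstacle here. The substantive step was identifying $T(\mathbf{1})=\mathbf{1}$, which is already established in Lemma~\ref{lem:T(1)=1}. What remains is only to check that both values lie in the domain $I$ of $\alpha$ and to invoke its injectivity. This is exactly where $p>1$ is used: for $p=1$ the function $\alpha$ is constant and the argument breaks down.
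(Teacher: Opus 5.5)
Your proof is correct and is essentially the paper's argument: $T(\mathbf{1})=\mathbf{1}$, norm-additivity applied to the pair $\mathbf{1},f$, the identity $\alpha(g(0))=\Vp{\mathbf{1}+g}^p$ for $g=f$ and $g=T(f)$, and injectivity of $\alpha$. Your explicit check that $f(0)$ and $T(f)(0)$ lie in $I$, the domain of $\alpha$, is a detail the paper leaves implicit.
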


\begin{proof}
Fix $f\in\Sp$.
By Lemma~\ref{lem:T(1)=1},
$T(\mathbf1)=\mathbf1$.
Since $T$ is norm-additive,
\[
\Vp{\mathbf{1}+T(f)}
=
\Vp{T(\mathbf{1})+T(f)}
=\Vp{\mathbf{1}+f}.
\]
Applying \eqref{lem2.2.1} twice,
\[
\alpha(T(f)(0))
=\Vp{\mathbf{1}+T(f)}^p
=\Vp{\mathbf{1}+f}^p
=\alpha(f(0)).
\]
Since $\alpha$ is injective,
we conclude that $T(f)(0)=f(0)$.
\end{proof}

\subsection{The induced map
and proof of the main theorem}

Having established the preservation of the initial value under $T$, we can
now transfer $T$ to $\B$ via the derivative map $D_p$.

Using the bijection $D_p$, we define the composite map
$\widetilde{T}\colon\B\to\B$ by
\[
\widetilde{T}:=D_p\circ T\circ D_p^{-1}.
\]
To apply Theorem~\ref{Thm:auxiliary representation theorem ball},
we now verify that the induced map satisfies its hypotheses.
\[
\begin{CD}
\Sp@>{T}>>\Sp\\
@A{D_p^{-1}}AA
@VV{D_p}V\\
\B@>>{\widetilde{T}}>\B
\end{CD}
\]

\begin{prop}\label{prop:Ttilde-norm-additive}
$\widetilde{T}\colon\B\to\B$
is a surjective norm-additive map.
\end{prop}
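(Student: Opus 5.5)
Surjectivity of $\widetilde{T}$ is immediate, since $D_p$, $T$ and $D_p^{-1}$ are all surjective (the first and last by Lemma~\ref{lem:[p] derivative map}). So the work is in norm-additivity, that is,
\[
\VI{\widetilde{T}(u)+\widetilde{T}(v)}=\VI{u+v}\qquad(u,v\in\B).
\]
To get it, I will translate the norm-additivity of $T$ into a statement about derivatives, using the preservation of initial values from Lemma~\ref{lem:T(f)(0)=f(0)}.

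Fix $u,v\in\B$ and put $f:=D_p^{-1}(u)$ and $g:=D_p^{-1}(v)$. These lie in $\Sp$ with $f'=u$, $g'=v$, and
\[
f(0)=(1-\VI{u}^p)^{1/p},\qquad g(0)=(1-\VI{v}^p)^{1/p}.
\]
By definition, $\widetilde{T}(u)=T(f)'$ and $\widetilde{T}(v)=T(g)'$. Lemma~\ref{lem:T(f)(0)=f(0)} gives $T(f)(0)=f(0)$ and $T(g)(0)=g(0)$. Since $T(f)(0)+T(g)(0)\ge0$ and $f(0)+g(0)\ge0$, the norm-additivity $\Vp{T(f)+T(g)}=\Vp{f+g}$, raised to the $p$-th power, reads
\[
(f(0)+g(0))^p+\VI{T(f)'+T(g)'}^p=(f(0)+g(0))^p+\VI{f'+g'}^p.
\]
Cancelling the common first term and taking $p$-th roots gives $\VI{\widetilde{T}(u)+\widetilde{T}(v)}=\VI{u+v}$, which is the required identity.

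I do not expect a genuine obstacle, because the hard part, preservation of the initial value, is already done in Lemma~\ref{lem:T(f)(0)=f(0)}. The only point needing care is that $\widetilde{T}$ really maps into $\B$. This holds because $D_p$ maps $\Sp$ into $\B$: each $f\in\Sp$ has $f'\ge0$ and $\VI{f'}\le1$.
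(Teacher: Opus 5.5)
Your proof is correct and follows essentially the same route as the paper. Surjectivity comes from the bijectivity of $D_p$ together with the surjectivity of $T$. For norm-additivity, you expand $\Vp{T(f)+T(g)}^p=\Vp{f+g}^p$ with $f=D_p^{-1}(u)$, $g=D_p^{-1}(v)$, cancel the equal initial-value terms using Lemma~\ref{lem:T(f)(0)=f(0)}, and read off $\VI{\widetilde{T}(u)+\widetilde{T}(v)}=\VI{u+v}$.
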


\begin{proof}
Since $D_p$ is bijective and $T$ is surjective,
it follows that $\widetilde{T}=D_p\circ T\circ D_p^{-1}$ is surjective.

We prove that $\widetilde{T}$ is norm-additive.
Let $u,v\in\B$ and put
\[
f:=D_p^{-1}(u),\qquad
g:=D_p^{-1}(v).
\]
Since $T\colon\Sp\to\Sp$ is norm-additive,
$\Vp{T(f)+T(g)}^p=\Vp{f+g}^p$.
That is,
\[
\bigl(T(f)(0)+T(g)(0)\bigr)^p
+\VI{T(f)'+T(g)'}^p
=\bigl(f(0)+g(0)\bigr)^p
+\VI{f'+g'}^p.
\]
By Lemma~\ref{lem:T(f)(0)=f(0)},
$T(h)(0)=h(0)$ for every $h\in\Sp$. The
preceding equality yields
\[
\VI{T(f)'+T(g)'}
=\VI{f'+g'}.
\]
Combining $\widetilde{T}=D_p\circ T\circ D_p^{-1}$ with
the preceding equalities shows that
\[
\widetilde{T}(u)=D_p(T(f))=T(f)',\qquad
\widetilde{T}(v)=D_p(T(g))=T(g)'.
\]
By the choice of $f,g$,
we obtain $f'=D_p(f)=u$ and $g'=D_p(g)=v$.
Combining the preceding equalities gives
\[
\VI{\widetilde{T}(u)+\widetilde{T}(v)}
=\VI{T(f)'+T(g)'}
=\VI{f'+g'}
=\VI{u+v},
\]
which proves the norm-additivity of $\widetilde{T}$.
\end{proof}

We can now complete the proof of
Theorem~\ref{thm1.1} for $1<p<\infty$.

\begin{proof}[\textbf{Proof of Theorem~\ref{thm1.1} for $1<p<\infty$.}]
By Proposition~\ref{prop:Ttilde-norm-additive},
$\widetilde T\colon\B\to\B$ is a surjective norm-additive map.
Therefore, Theorem~\ref{Thm:auxiliary representation theorem ball},
applied with $X=Y=I$, yields a homeomorphism
$\tau\colon I\to I$ such that
\[
\widetilde T(u)=u\circ\tau
\qquad (u\in\B).
\]
Let $f\in\Sp$.
Since $\widetilde{T}=D_p\circ T\circ D_p^{-1}$,
we have $D_p\circ T=\widetilde{T}\circ D_p$.
Applying the preceding equality 
with $D_p(g)=g'$ for $g\in\Sp$ gives
\[
T(f)'=D_p(T(f))
=\widetilde{T}(D_p(f))
=f'\circ\tau.
\]
Hence $T(f)'=f'\circ\tau$.
Since $T(f)(0)=f(0)$ by Lemma~\ref{lem:T(f)(0)=f(0)},
the Fundamental Theorem of Calculus
gives
\[
T(f)(t)=f(0)+\int_0^tf'(\tau(s))\,ds
\]
for every $t\in I$.
This proves the forward implication
of Theorem~\ref{thm1.1} for $1<p<\infty$.
\end{proof}

\section{The case $p=\infty$}\label{section4}

We now prove the forward implication of
Theorem~\ref{thm1.1} for $p=\infty$.
Throughout this section, set
\begin{align*}
\Sinf:=\{f\in C^1(I):f(0)\geq0,\ f'\geq0,\ \Vinf{f}=1\},\\
\|f\|_{(\infty)}:=\max\{|f(0)|,\VI{f'}\}
\qquad(f\in C^1(I)).
\end{align*}
Let $T\colon\Sinf\to\Sinf$ be a surjective norm-additive map; that is,
\[
\|T(f)+T(g)\|_{(\infty)}
=
\|f+g\|_{(\infty)}
\qquad (f,g\in\Sinf).
\]
Our strategy is to identify $\Sinf$ with a positive unit sphere
of a suitable space of continuous functions and then apply
Theorem~\ref{Thm:auxiliary representation theorem}.

\subsection{Identification with a positive unit sphere}

\leavevmode
Let
$L:=\{-1\}\cup I$.
For each $f\in\Sinf$, define $\widehat{f}\in C(L)$ by
\begin{equation}\label{eq:hat-f}
\widehat{f}(x)=
\begin{cases}
f(0), & x=-1,\\
f'(x), & x\in I.
\end{cases}
\end{equation}
For every $f\in\Sinf$, we have $f(0)\geq0$, $f'\geq0$, and
$\VL{\widehat{f}}=\max\{f(0),\V{f'}_{I}\}
=\Vinf{f}=1$.
Hence $\widehat{f}\in\SCL$, and thus the map
$D_\infty\colon\Sinf\to\SCL$, defined by
\[
D_\infty(f)=\widehat{f}
\qquad (f\in\Sinf)
\]
is well defined.
We next verify that $D_\infty$ is a bijective norm-additive map.

\begin{lem}\label{lem:D_infty}
The map $D_\infty$ is a bijective
norm-additive map with inverse
\[
D_\infty^{-1}(u)(t)=u(-1)+\int_0^t u(s)\,ds 
\]
for all $u\in\SCL$ and $t\in I$.
\end{lem}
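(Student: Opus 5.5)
We will mirror the proof of Lemma~\ref{lem:[p] derivative map}: define an explicit candidate inverse, check it lands in $\Sinf$, verify both compositions are identities, and then check norm-additivity directly from the definitions of $\Vinf{\cdot}$ and $\VL{\cdot}$.

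\textbf{The candidate inverse.} For $u\in\SCL$ set
\[
J_\infty(u)(t):=u(-1)+\int_0^t u(s)\,ds\qquad(t\in I),
\]
where $u(s)$ for $s\in I$ means the restriction of $u$ to $I\subset L$. Since $\{-1\}$ is an isolated point of $L$, the restriction $u|_I$ is continuous on $I$. Hence $J_\infty(u)\in C^1(I)$, with $J_\infty(u)(0)=u(-1)\ge0$ and $J_\infty(u)'=u|_I\ge0$. Moreover
\[
\Vinf{J_\infty(u)}=\max\{u(-1),\VI{u|_I}\}=\VL{u}=1,
\]
so $J_\infty(u)\in\Sinf$.

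\textbf{The two compositions.} We have $\widehat{J_\infty(u)}(-1)=u(-1)$ and $\widehat{J_\infty(u)}(x)=u(x)$ for $x\in I$. Thus $D_\infty\circ J_\infty=\operatorname{id}_{\SCL}$. Conversely, for $f\in\Sinf$, the Fundamental Theorem of Calculus gives
\[
J_\infty(D_\infty(f))(t)=f(0)+\int_0^t f'(s)\,ds=f(t),
\]
so $J_\infty\circ D_\infty=\operatorname{id}_{\Sinf}$. Therefore $D_\infty$ is bijective with $D_\infty^{-1}=J_\infty$, which is the stated formula.

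\textbf{Norm-additivity.} For $f,g\in\Sinf$, the map $f\mapsto\widehat f$ is additive pointwise: $\widehat f+\widehat g$ takes the value $f(0)+g(0)$ at $-1$ and $f'+g'$ on $I$. Hence
\[
\VL{\widehat f+\widehat g}=\max\{|f(0)+g(0)|,\VI{f'+g'}\}=\Vinf{f+g}.
\]
This says exactly $\VL{D_\infty(f)+D_\infty(g)}=\Vinf{f+g}$.

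No step is a real obstacle. The only point needing care is the continuity of $u|_I$ and of $\widehat f$ on $L$, which holds because $-1$ is isolated in $L$ (with the topology inherited from $\mathbb{R}$).
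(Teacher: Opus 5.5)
Your proposal is correct and follows the paper's proof essentially step by step. It uses the same explicit inverse $J_\infty$, checks in the same way that it lands in $\mathcal{S}_\infty^+$ and that both compositions are identities, and computes $\|D_\infty(f)+D_\infty(g)\|_L=\max\{f(0)+g(0),\|f'+g'\|_I\}=\|f+g\|_{(\infty)}$ directly, as the paper does.
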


\begin{proof}
Define
\[
\Jinf(u)(t):=u(-1)+\int_0^tu(s)\,ds
\qquad(u\in\SCL,\ t\in I).
\]
Fix an arbitrary $u\in\SCL$. We prove that $\Jinf(u)\in\Sinf$.
Since $u\in\SCL$, we have $u\geq0$ on $L$ and $\V{u}_L=1$. 
Hence
\[
\Jinf(u)(0)=u(-1)\geq0,\qquad
\Jinf(u)'=u\geq0
\quad\mbox{on  $I$}.
\]
Moreover, we have
\[
\Vinf{\Jinf(u)}
=\max\{\Jinf(u)(0),\VI{\Jinf(u)'}\}
=\max\{u(-1),\VI{u}\}
=\V{u}_L=1.
\]
These show that $\Jinf(u)\in\Sinf$.
Thus $\Jinf\colon\SCL\to\Sinf$ is well defined.

We prove that $D_\infty$ is a bijective map
with its inverse $\Jinf$.
By the
definition of $D_\infty$ and the preceding equalities,
we obtain
\[
D_\infty(\Jinf(u))(x)
=\widehat{\Jinf(u)}(x)=
\begin{cases}
\Jinf(u)(0),&x=-1,\\
\Jinf(u)'(x),&x\in I
\end{cases}
=u(x)
\]
for every $u\in\SCL$ and $x\in L$.
Thus $D_\infty\circ\Jinf=\operatorname{id}_{\SCL}$.
Next, we show that $\Jinf\circ D_\infty=\operatorname{id}_{\Sinf}$. Fix an arbitrary $f\in\Sinf$.
By definition,
$D_\infty(f)=\widehat{f}$.
Applying \eqref{eq:hat-f},
$\widehat{f}(-1)=f(0)$ and $\widehat{f}=f'$ on $I$.
By the defining property of $\Jinf$, we have
\[
\Jinf(D_\infty(f))(t)
=\widehat{f}(-1)+\int_0^t\widehat{f}(s)\,ds
=f(0)+\int_0^tf'(s)\,ds=f(t)
\]
for every $t\in I$.
Thus 
$\Jinf\circ D_\infty=\operatorname{id}_{\Sinf}$.
Therefore we have proved that $D_\infty$ is a bijective map
with its inverse $\Jinf$.

Finally, we prove that $D_\infty$ is norm-additive. 
Let $f,g\in \Sinf$.
By the definition of $D_\infty$, we have
$D_\infty(h)(-1)=\widehat{h}(-1)=h(0)$
and $D_\infty(h)=h'$ on $I$
for $h\in \Sinf$.
This implies that
\[
\V{D_\infty(f)+D_\infty(g)}_L
=\max\{f(0)+g(0),\VI{f'+g'}\}
=\Vinf{f+g}.
\]
Hence,
$D_\infty$ is norm-additive.
\end{proof}

\subsection{Application of the sphere representation theorem}

Having identified $\Sinf$ with $\SCL$ via $D_\infty$,
we now transfer $T$ to $\SCL$.
Define
$\Delta\colon\SCL\to\SCL$ by
\[
\Delta:=D_\infty\circ T\circ D_\infty^{-1}.
\]
Since $D_\infty$ is a bijective norm-additive map
by Lemma~\ref{lem:D_infty},
its inverse $D_\infty^{-1}$ is also norm-additive.
Together with the surjectivity and norm-additivity of $T$,
this shows that $\Delta$ is a surjective norm-additive map.
\[
\begin{CD}
\Sinf@>{T}>>\Sinf\\
@A{D_\infty^{-1}}AA
@VV{D_\infty}V\\
\SCL@>>{\Delta}>\SCL
\end{CD}
\]

We are now ready to prove the forward implication of the main theorem
for $p=\infty$.

\begin{proof}[\textbf{Proof of Theorem~\ref{thm1.1} for $p=\infty$.}]
Since $\Delta\colon\SCL\to\SCL$
is a surjective norm-additive map, Theorem~\ref{Thm:auxiliary representation theorem}, applied with
$X=Y=L$, yields a homeomorphism
$\widetilde\tau\colon L\to L$ satisfying
\begin{equation*}
\Delta(u)=u\circ\widetilde\tau
\qquad (u\in\SCL).
\end{equation*}
Since $-1$ is the unique isolated point of $L$, we have
$\widetilde\tau(-1)=-1$.
Set $\tau:=\widetilde{\tau}|_{I}$.
Then $\tau\colon I\to I$ is a homeomorphism.
Now let $f\in \Sinf$, and fix $t\in I$. Since
$\Delta=D_\infty\circ T\circ D_\infty^{-1}$, we have
$D_\infty\circ T=\Delta\circ D_\infty$.
Since $D_\infty(f)=\widehat{f}$, the representation
formula of $\Delta$ shows that
\[
\Delta(D_\infty(f))=\widehat{f}\circ\widetilde{\tau}.
\]
Combining this with the preceding equality gives
$T(f)=D_\infty^{-1}(\widehat{f}\circ\widetilde{\tau})$.
Applying the formula for $D_\infty^{-1}$ in Lemma~\ref{lem:D_infty}, we obtain
\[
T(f)(t)
=
(\widehat f\circ\widetilde\tau)(-1)
+\int_0^t(\widehat f\circ\widetilde\tau)(s)\,ds.
\]
Since $\widetilde\tau(-1)=-1$, we have
$(\widehat f\circ\widetilde\tau)(-1)=\widehat f(-1)=f(0)$.
Moreover, for every $s\in I$, we have
$\widetilde\tau(s)=\tau(s)$, and hence
$(\widehat f\circ\widetilde\tau)(s)=\widehat f(\tau(s))
=f'(\tau(s))$.
Therefore,
\[
T(f)(t)
=
f(0)+\int_0^t f'(\tau(s))\,ds
\qquad(t\in I).
\]
This proves the forward implication of
Theorem~\ref{thm1.1} when $p=\infty$.
\end{proof}

\section{The converse implication and
the linear extension}\label{section5}

In Sections~\ref{section3} and~\ref{section4}, we proved the forward implication of
Theorem~\ref{thm1.1} for $1<p<\infty$ and $p=\infty$,
respectively, and obtained the representation of every surjective
norm-additive map in terms of a homeomorphism of $I$.
We now prove the converse implication and the linear extension result.
In fact, the maps arising from such homeomorphisms have the required
properties for all $1\leq p\leq\infty$.

Let $\tau\colon I\to I$ be a homeomorphism,
and define, for each $1\leq p\leq\infty$,
$U_\tau\colon\Sp\to C^1(I)$ by
\[
U_\tau(f)(t)
=f(0)+\int_0^t f'(\tau(s))\,ds
\qquad(f\in\Sp,\,t\in I).
\]

We now verify that the map $U_\tau$ has the properties needed for
the converse implication and the linear extension result.

\begin{prop}\label{prop:converse implication}
Let $1\le p\le\infty$.
Then $U_\tau$ is a surjective norm-additive map
from $\Sp$ onto itself.
Moreover, $U_\tau$ extends to a surjective linear isometry
from $(C^1(I),\Vp{\cdot})$ onto itself.
\end{prop}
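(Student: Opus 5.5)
The natural approach is to define the linear extension first and then restrict it. For $f\in C^1(I)$ put
\[
V_\tau(f)(t):=f(0)+\int_0^t f'(\tau(s))\,ds\qquad(t\in I).
\]
Since $\tau$ is continuous, $f'\circ\tau\in C(I)$. Hence $V_\tau(f)\in C^1(I)$, with $V_\tau(f)(0)=f(0)$ and $V_\tau(f)'=f'\circ\tau$. Linearity of $V_\tau$ is immediate from linearity of evaluation at $0$, of composition with $\tau$, and of the integral. Clearly $V_\tau|_{\Sp}=U_\tau$.

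Next I would show that $V_\tau$ is an isometry. Since $\tau$ is a bijection of $I$ onto itself, $\VI{f'\circ\tau}=\VI{f'}$. Therefore $\Vp{V_\tau(f)}$ is computed from the same pair $(|f(0)|,\VI{f'})$ as $\Vp{f}$, and $\Vp{V_\tau(f)}=\Vp{f}$ for every $1\le p\le\infty$. For surjectivity I would use the inverse homeomorphism: $V_{\tau^{-1}}\circ V_\tau=\operatorname{id}$. This holds because $V_{\tau^{-1}}(V_\tau f)$ has initial value $f(0)$ and derivative $(f'\circ\tau)\circ\tau^{-1}=f'$, so the Fundamental Theorem of Calculus gives the identity. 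Symmetrically, $V_\tau\circ V_{\tau^{-1}}=\operatorname{id}$. Thus $V_\tau$ is a surjective linear isometry of $(C^1(I),\Vp{\cdot})$.

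It remains to treat $U_\tau$ on $\Sp$. If $f\in\Sp$, then $U_\tau(f)(0)=f(0)\ge0$, $U_\tau(f)'=f'\circ\tau\ge0$, and $\Vp{U_\tau(f)}=1$, so $U_\tau$ maps $\Sp$ into $\Sp$. The same reasoning applied to $V_{\tau^{-1}}$ shows that $V_{\tau^{-1}}$ maps $\Sp$ into $\Sp$. Consequently every $g\in\Sp$ equals $U_\tau(V_{\tau^{-1}}g)$, which gives surjectivity onto $\Sp$. Norm-additivity follows from linearity and the isometry property:
\[
\Vp{U_\tau(f)+U_\tau(g)}=\Vp{V_\tau(f+g)}=\Vp{f+g}.
\]

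No step here is a real obstacle. The only point needing care is the identity $\VI{f'\circ\tau}=\VI{f'}$, which uses the surjectivity of $\tau$, together with the inverse computation, which uses that $\tau^{-1}$ is continuous so that $V_{\tau^{-1}}$ is well defined on $C^1(I)$. The argument is uniform in $1\le p\le\infty$.
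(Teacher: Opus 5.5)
Your proposal is correct and follows essentially the same route as the paper. The paper likewise defines the linear map $\widetilde U_\tau$ on all of $C^1(I)$ and checks norm preservation using the surjectivity of $\tau$. It gets surjectivity by composing with $\widetilde U_{\tau^{-1}}$, then restricts to $\Sp$ to obtain surjectivity onto $\Sp$ and, via linearity, norm-additivity.
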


The first assertion of Proposition~\ref{prop:converse implication}
proves the converse implication of Theorem~\ref{thm1.1}.

\begin{proof}
Let $1\leq p\leq\infty$.
Define a map $\widetilde{U}_{\tau}\colon C^1(I)\to C^1(I)$ by
\[
\widetilde U_\tau(f)(t)=f(0)+\int_0^tf'(\tau(s))\,ds
\qquad(f\in C^1(I),\,t\in I).
\]
By definition, $\widetilde U_\tau$ is a linear map
from $C^1(I)$ to itself,
and $\widetilde U_\tau=U_\tau$ on $\Sp$.
We show that $\widetilde U_\tau$ is surjective.
Let $f\in C^1(I)$.
Then
\[
\widetilde{U}_{\tau^{-1}}(f)(t)
=f(0)+\int_0^tf'(\tau^{-1}(s))\,ds
\qquad(t\in I).
\]
This implies that
\[
\widetilde{U}_{\tau^{-1}}(f)(0)=f(0),\qquad
\widetilde{U}_{\tau^{-1}}(f)'=f'\circ\tau^{-1}
\quad\mbox{on $I$.}
\]
Combining this with the defining property
of $\widetilde{U}_{\tau}$ shows that
\[
(\widetilde{U}_{\tau}\circ
\widetilde{U}_{\tau^{-1}})(f)(t)
=f(0)+\int_0^t(f'\circ\tau^{-1})(\tau(s))\,ds
=f(t)
\]
for every $t\in I$.
Therefore $\widetilde{U}_\tau\circ\widetilde{U}_{\tau^{-1}}$ is
the identity map.
This proves that $\widetilde{U}_{\tau}$ is surjective.
Since $\widetilde{U}_{\tau}(f)(0)=f(0)$
and $\widetilde{U}_{\tau}(f)'=f'\circ\tau$ on $I$,
if $1\leq p<\infty$, then
\[
\Vp{\widetilde{U}_{\tau}(f)}^p
=|\widetilde{U}_{\tau}(f)(0)|^p+\VI{\widetilde{U}_{\tau}(f)'}^p
=|f(0)|^p+\VI{f'\circ\tau}^p
=\Vp{f}^p,
\]
where we have used that $\tau$ is surjective.
In the same way, if $p=\infty$, then
\[
\Vinf{\widetilde{U}_{\tau}(f)}
=\max\{|\widetilde{U}_{\tau}(f)(0)|,\VI{\widetilde{U}_{\tau}(f)'}\}
=\max\{|f(0)|,\VI{f'\circ\tau}\}
=\Vinf{f}.
\]
Since $f\in C^1(I)$ was arbitrary,
this shows that $\widetilde{U}_{\tau}$ is norm-preserving.
Since $\widetilde{U}_{\tau}$ is linear and norm-preserving,
it is an isometry.
Therefore, $\widetilde{U}_{\tau}$ is a surjective linear isometry
from $(C^1(I),\Vp{\cdot})$ onto itself that extends $U_{\tau}$.

Next we prove that $U_\tau$ is a surjective map
from $\Sp$ onto itself.
Let $f\in\Sp$. Then
\[
\widetilde{U}_{\tau}(f)(0)=f(0)\geq0,\qquad
\widetilde{U}_{\tau}(f)'=f'\circ\tau\geq0
\quad\mbox{on $I$}.
\]
Since $\widetilde{U}_{\tau}$ is norm-preserving,
we have $\Vp{\widetilde{U}_\tau(f)}=\Vp{f}=1$.
Since $f\in\Sp$ was arbitrary, we conclude that
$\widetilde{U}_\tau(\Sp)\subset\Sp$.
Applying the same argument as above,
we obtain $\widetilde{U}_{\tau^{-1}}(\Sp)\subset\Sp$.
Since $\widetilde{U}_\tau\circ\widetilde{U}_{\tau^{-1}}$ is
the identity map, we obtain
$\Sp\subset\widetilde{U}_\tau(\Sp)$.
Therefore $\widetilde{U}_\tau(\Sp)=\Sp$.
Since $\widetilde{U}_\tau$ is an extension
of $U_\tau$, we conclude $U_\tau(\Sp)=\Sp$.
Thus $U_\tau$ is a surjective map
from $\Sp$ onto itself.

Finally, we prove that $U_\tau$ is norm-additive.
Let $f,g\in\Sp$.
Since $\widetilde{U}_\tau$ is a linear extension of $U_\tau$,
which preserves the norm $\Vp{\cdot}$, we have
\[
\Vp{U_\tau(f)+U_\tau(g)}
=\Vp{\widetilde U_\tau(f)+\widetilde U_\tau(g)}
=\Vp{\widetilde U_\tau(f+g)}
=\Vp{f+g}.
\]
Therefore $U_\tau$ is a surjective norm-additive
map from $\Sp$ onto itself.
\end{proof}

Having completed the proof of Theorem~\ref{thm1.1},
we now prove Corollary~\ref{cor1.2}.

\begin{proof}[\textbf{Proof of Corollary~\ref{cor1.2}}]
For both
 $1<p<\infty$ and $p=\infty$, Theorem~\ref{thm1.1}
provides a homeomorphism $\tau\colon I\to I$ satisfying
\[
T(f)(t)=f(0)+\int_0^t f'(\tau(s))\,ds
\qquad (f\in\Sp,\ t\in I).
\]
It is immediate from Proposition~\ref{prop:converse implication}
that $T$ extends to a surjective linear isometry
from $(C^1(I),\Vp{\cdot})$ onto itself.
\end{proof}

\section{Failure of rigidity for $p=1$}\label{section6}

Proposition~\ref{prop:converse implication} shows that
the converse implication of Theorem~\ref{thm1.1} remains
valid for $p=1$. In contrast, the forward implication of
Theorem~\ref{thm1.1} fails at the endpoint $p=1$.
Moreover, the linear extension conclusion of Corollary~\ref{cor1.2}
also fails. We give a single counterexample exhibiting both failures.

\begin{counterexample}
When $p=1$, there exists a surjective norm-additive map
\[
T\colon\mathcal S_1^+\to\mathcal S_1^+
\]
which is not of the form
\[
T(f)(t)=f(0)+\int_0^t f'(\tau(s))\,ds
\]
for any homeomorphism $\tau\colon I\to I$,
and which admits no linear extension to $C^1(I)$.

For $r\in I$, define
\[
f_r(t):=r+(1-r)t
\qquad (t\in I).
\]
Then 
$f_r(0)=r$ and $f_r'=(1-r)\mathbf{1}$.
Thus $\V{f_r}_{(1)}=1$ and hence
$f_r\in \mathcal{S}_1^+$.
Choose a bijection $\phi\colon I\to I$ such that
$\phi(1)\neq1$.
Define $T_\phi\colon \mathcal{S}_1^+\to\mathcal{S}_1^+$ by
\[
T_\phi(f)=
\begin{cases}
f_{\phi(r)}, & \text{if } f=f_r \text{ for some } r\in I,\\
f, & \text{otherwise},
\end{cases}
\qquad (f\in\mathcal{S}_1^+).
\]
We claim that $T_\phi$ has all the properties stated above.
Since $\phi$ is bijective, we have
$T_\phi^{-1}=T_{\phi^{-1}}$,
and hence $T_\phi$ is surjective.

We next prove that $T_\phi$ is norm-additive.
Let $f,g\in\mathcal{S}_1^+$.
We distinguish the following two cases.

\medskip
\noindent
{\bf Case 1.}
At least one of $f$ and $g$ is affine.

By symmetry, we may assume that $f=f_r$ for some $r\in I$.
Then
$f(0)=r$
and
$f'=(1-r)\mathbf{1}$.
Since $g'\ge0$ and $(1-r)\mathbf{1}$ is a nonnegative constant function, we have
\[
\V{(1-r)\mathbf{1}+g'}_{I}
=
(1-r)+\V{g'}_{I}.
\]
Therefore,
\[
\begin{aligned}
\|f+g\|_{(1)}
&=
r+g(0)+\V{(1-r)\mathbf{1}+g'}_{I}\\
&=
1+g(0)+\V{g'}_{I}=2,
\end{aligned}
\]
where the last equality follows from
$g\in\mathcal{S}_1^+$.

On the other hand,
$T_\phi(f)=f_{\phi(r)}$
and hence
$T_\phi(f)(0)=\phi(r)$
and
$T_\phi(f)'=(1-\phi(r))\mathbf{1}$.
Since $T_\phi(g)'\ge0$ and
$(1-\phi(r))\mathbf{1}$ is a nonnegative constant function, we have
\[
\V{(1-\phi(r))\mathbf{1}+T_\phi(g)'}_{I}
=
1-\phi(r)+\V{T_\phi(g)'}_{I}.
\]
Therefore,
\[
\begin{aligned}
\|T_\phi(f)+T_\phi(g)\|_{(1)}
&=
\phi(r)+T_\phi(g)(0)+\|(1-\phi(r))\mathbf{1}+T_\phi(g)'\|_{I}\\
&=
1+T_\phi(g)(0)+\|T_\phi(g)'\|_I
=2,
\end{aligned}
\]
where the last equality follows
from $T_\phi(g)\in\mathcal{S}_1^+$.
Thus
\[
\|T_\phi(f)+T_\phi(g)\|_{(1)}
=
\|f+g\|_{(1)}.
\]

\medskip
\noindent
{\bf Case 2.}
Neither $f$ nor $g$ is affine.

In this case,
$T_\phi(f)=f$
and $T_\phi(g)=g$.
Therefore,
$\|T_\phi(f)+T_\phi(g)\|_{(1)}
=\|f+g\|_{(1)}$.
Since $f$ and $g$ were arbitrary, we conclude that $T_\phi$ is
norm-additive.

We next show that $T_\phi$ does not have the representation in Theorem~\ref{thm1.1}. Since $\mathbf1=f_1$, we have
\[
T_\phi(\mathbf1)=f_{\phi(1)}.
\]
Since $\phi(1)\neq1$, we have
$f_{\phi(1)}\neq\mathbf1$, and hence
\[
T_\phi(\mathbf1)\neq\mathbf1.
\]
On the other hand, every map of the form
\[
T(f)(t)=f(0)+\int_0^t f'(\tau(s))\,ds
\]
satisfies $T(\mathbf1)=\mathbf1$.
Therefore $T_\phi$ does not have the representation
in Theorem~\ref{thm1.1}. Thus the forward implication
of Theorem~\ref{thm1.1} fails for $p=1$.

Finally, we prove that $T_\phi$ admits no linear extension
to $C^1(I)$.
Suppose, to the contrary, that there exists a linear map
$\Lambda\colon C^{1}(I)\to C^{1}(I)$
such that $\Lambda(f)=T_\phi(f)$ for every
$f\in\mathcal{S}_1^+$.
Define
\[
g(t)=\frac12+\frac{t^2}{4}
\qquad (t\in I).
\]
Then $g(0)=1/2$, $g'(t)=t/2$ for $t\in I$, and
$\V{g'}_{I}=1/2$.
Hence 
$\V{g}_{(1)}=1$ and $g\in\mathcal{S}_1^+$.
Set
\[
h
=\frac12g+\frac12\mathbf1.
\]
Then
$h(t)=3/4+t^2/8$,
$h(0)=3/4$,
$h'(t)=t/4$,
and
$\|h'\|_{I}=1/4$.
Hence $h\in\mathcal{S}_1^+$.
Since $h$ is not affine, $T_\phi(h)=h$.
Since $\Lambda$ is a linear extension of $T_\phi$,
\[
h=\Lambda(h)
=
\Lambda\!\left(\frac12g+\frac12\mathbf1\right)
=
\frac{1}{2}\Lambda(g)+\frac{1}{2}\Lambda(\mathbf1)
=
\frac{1}{2}T_\phi(g)+\frac{1}{2}T_\phi(\mathbf1).
\]
Since $g$ is not affine, we have $T_\phi(g)=g$.
Moreover, since $\mathbf1=f_1$ is affine,
it follows from the definition of $T_\phi$ that
$T_\phi(\mathbf1)=f_{\phi(1)}$. Thus
\[
h=\frac{1}{2}g+\frac{1}{2}f_{\phi(1)}.
\]
On the other hand, by the definition of $h$,
we have
$g+f_{\phi(1)}=2h=g+\mathbf{1}$.
Thus $f_{\phi(1)}=\mathbf{1}$.
This shows that
$\phi(1)+(1-\phi(1))t=f_{\phi(1)}(t)=1$
for all $t\in I$.
Taking $t=0$, we obtain $\phi(1)=1$, which contradicts
the assumption that $\phi(1)\neq1$.
Hence $T_\phi$ admits no linear extension to
$C^{1}(I)$.
\end{counterexample}

\section*{Statements and Declarations}

\noindent\textbf{Funding.}
The second author was partially supported by the Japan Society for the
Promotion of Science (JSPS) KAKENHI Grant Number JP 25K07028.
\medskip

\noindent\textbf{Competing Interests.}
The authors have no relevant financial or non-financial interests
to disclose.

\medskip

\noindent\textbf{Data Availability.}
Data sharing is not applicable to this article as no datasets were
generated or analyzed during the current study.

\medskip

\noindent\textbf{Use of generative AI.}
During the preparation of this manuscript, the authors used ChatGPT
(OpenAI) for language editing, literature checking, and assistance in
reviewing the exposition and mathematical arguments. All mathematical
content, references, and the final text were independently verified and
approved by the authors, who take full responsibility for the manuscript.

\begin{thebibliography}{99}

\bibitem{CDZ}
L.~Chen, Y.~Dong and B.~Zheng,
\newblock On norm-additive maps between the maximal groups of positive continuous functions,
\newblock {\em Results Math.} \textbf{74} (2019), no.~4, Art.~152.
%\newblock \url{https://doi.org/10.1007/s00025-019-1076-x}

\bibitem{CHMP}
M.~Cueto-Avellaneda, D.~Hirota, T.~Miura and A.M.~Peralta,
\newblock Exploring new solutions to Tingley's problem for function algebras,
\newblock {\em Quaest. Math.} \textbf{46} (2023), no.~7, 1315--1346.
%\newblock \url{https://doi.org/10.2989/16073606.2022.2072787}

\bibitem{CP}
M.~Cueto-Avellaneda and A.M.~Peralta,
\newblock On the Mazur--Ulam property for the space of Hilbert-space-valued continuous functions,
\newblock {\em J. Math. Anal. Appl.} \textbf{479} (2019), no.~1, 875--902.
%\newblock \url{https://doi.org/10.1016/j.jmaa.2019.06.056}

\bibitem{enam}
Y.~Enami, D.~Hirota, H.~Koshimizu and M.-R.~Lin,
\newblock Surjective isometries on the positive parts of the unit spheres of some function spaces,
\newblock arXiv:2606.30952, 2026.
%\newblock \url{https://arxiv.org/abs/2606.30952}

\bibitem{HDL}
J.~Hao, Y.~Dong and L.~Li,
\newblock Maps preserving the norm of the positive sum in $L^p$ spaces,
\newblock {\em Acta Math. Sci. Ser. B (Engl. Ed.)}
\textbf{42} (2022), no.~2, 789--794.
%\newblock \url{https://doi.org/10.1007/s10473-022-0223-8}


\bibitem{Hatori2022}
O.~Hatori,
\newblock The Mazur--Ulam property for uniform algebras,
\newblock {\em Studia Math.} \textbf{265} (2022), no.~2, 227--239.
%\newblock \url{https://doi.org/10.4064/sm210703-11-9}

\bibitem{Hatori2023}
O.~Hatori,
\newblock The Mazur--Ulam property for a Banach space which satisfies a separation condition,
\newblock {\em RIMS K\^oky\^uroku Bessatsu} \textbf{B93} (2023), 29--82.

\bibitem{HOS}
O.~Hatori, S.~Oi and R.~Shindo~Togashi,
\newblock Tingley's problems on uniform algebras,
\newblock {\em J. Math. Anal. Appl.} \textbf{503} (2021), no.~2, Paper No.~125346.
%\newblock \url{https://doi.org/10.1016/j.jmaa.2021.125346}

\bibitem{HM}
D.~Hirota and T.~Miura,
\newblock Tingley's problem for a Banach space of Lipschitz functions on the closed unit interval,
\newblock {\em RIMS K\^oky\^uroku Bessatsu} \textbf{B93} (2023), 157--181.

\bibitem{LNW}
C.-W.~Leung, C.-K.~Ng and N.-C.~Wong,
\newblock On a variant of Tingley's problem for some function spaces,
\newblock {\em J. Math. Anal. Appl.} \textbf{496} (2021), no.~1, Paper No.~124800.
%\newblock \url{https://doi.org/10.1016/j.jmaa.2020.124800}

\bibitem{peralta}
A.M.~Peralta,
\newblock A survey on Tingley's problem for operator algebras,
\newblock {\em Acta Sci. Math. (Szeged)} \textbf{84} (2018), no.~1--2, 81--123.
%\newblock \url{https://doi.org/10.14232/actasm-018-255-0}

\bibitem{SS}
L.~Sun and Y.~Sun,
\newblock On extension of norm-additive maps between the positive unit spheres of $\ell_q(\ell_p)$,
\newblock {\em Results Math.} \textbf{79} (2024), no.~3, Art.~126.
%\newblock \url{https://doi.org/10.1007/s00025-024-02154-y}

\bibitem{ting}
D.~Tingley,
\newblock Isometries of the unit sphere,
\newblock {\em Geom. Dedicata} \textbf{22} (1987), no.~3, 371--378.
%\newblock \url{https://doi.org/10.1007/BF00147942}

\bibitem{Wang1994}
R.~Wang,
\newblock Isometries between the unit spheres of $C_0(\Omega)$ type spaces,
\newblock {\em Acta Math. Sci. Ser. B (Engl. Ed.)} \textbf{14} (1994), no.~1, 82--89.
%\newblock \url{https://doi.org/10.1016/S0252-9602(18)30093-6}

\bibitem{Wang1996}
R.~Wang,
\newblock Isometries of $C_0^{(n)}(X)$,
\newblock {\em Hokkaido Math. J.} \textbf{25} (1996), no.~3, 465--519.
%\newblock \url{https://doi.org/10.14492/hokmj/1351516747}

\bibitem{ZTW_lattice}
J.~Zhang, M.-C.~Tsai and N.-C.~Wong,
\newblock Norm of positive sum preservers of smooth Banach lattices and $L^p(\mu)$ spaces,
\newblock {\em J. Nonlinear Convex Anal.} \textbf{20} (2019), no.~12, 2613--2621.

\bibitem{ZTW_noncommutative}
J.~Zhang, M.-C.~Tsai and N.-C.~Wong,
\newblock Norm of positive sum preservers of noncommutative $L^p(M)$ spaces,
\newblock {\em J. Nonlinear Convex Anal.} \textbf{22} (2021), no.~2, 251--264.

\end{thebibliography}
\end{document}